\newcommand{\I}{\mathbb{I}}
\newcommand{\N}{\mathbb{N}}
\newcommand{\R}{\mathbb{R}}
\newcommand{\real}{\mathbb{R}}
\newcommand{\Z}{\mathbb{Z}}
\newcommand{\ra}{\rightarrow}
\newcommand{\xra}{\xrightarrow}
\newcommand{\ira}{\hookrightarrow}
\newcommand{\colim}{\mathrm{colim}}
\newcommand{\id}{1}
\newcommand{\im}{{\mathrm{im}\;\!\!}}
\newcommand{\MANIFOLDS}{\mathscr{M}}
\newcommand{\pushoutcorner}{\ar@{}[dr]|{\text{\pigpenfont R}}}
\newcommand{\pullbackcorner}{\ar@{}[dr]|{\text{\pigpenfont J}}}
\newcommand{\half}{\nicefrac{1}{2}}
\newcommand{\ring}[1]{{\ifthenelse{1=#1}{R}{{\ifthenelse{2=#1}{S}{T}}}}}
\newcommand{\semiring}[1]{{\ifthenelse{0=#1}{\N}{{\ifthenelse{1=#1}{S}{T}}}}}
\newcommand{\semimodule}[1]{{\ifthenelse{1=#1}{M}{N}}}
\newcommand{\tensor}[1]{{\ifthenelse{0=#1}{\otimes_\N\;}{{\ifthenelse{1=#1}{\otimes_{\semiring{1}}\;}{\otimes_{\semiring{1}[\Sigma_p]}}}}}}
\newcommand{\cohomologysheaf}[1]{\mathcal{H}^{#1}}
\newcommand{\openstar}{{\sf{st}}}%{\mathop{st}}%{\mathop{star}\;\!\!}
\newcommand{\sing}{{\sf{sing}}}%{\mathop{sing}\;\!\!}
\newcommand{\digraph}[1]{\ifthenelse{1=#1}{G}{\ifthenelse{2=#1}{H}{I}}}
\newcommand{\constantsheaf}[2]{{\sf{Const}}_{#1;#2}}  %{\text{const}_{#1;#2}}
\newcommand{\globalsections}[2]{\Gamma_{#1}#2}
\newcommand{\sheaf}[1]{\ifthenelse{1=#1}{\mathcal{F}}{\ifthenelse{2=#1}{\mathcal{G}}{\mathcal{H}}}}
\newcommand{\cosheaf}[1]{\ifthenelse{1=#1}{\mathcal{A}}{\ifthenelse{2=#1}{\mathcal{B}}{\mathcal{C}}}}
\newcommand{\homologysheaf}{\mathcal{H}}
\newcommand{\Grading}{\bullet} % GRADING
\newcommand{\setmin}{{\,\! - \!\,}}
\newcommand{\alexanderduality}{{\sf AD}}  %{\mathfrak{A}}
\newcommand{\closed}{{\sf{clo}}}
\newcommand{\compact}{{\sf{com}}}
\newcommand{\proper}{{\sf{prop}}}
\newcommand{\pathcomponents}{\pi_0}
\newcommand{\projection}{\tau}
\newcommand{\freepositivecone}[1]{\R^+[#1]}
\newcommand{\sections}{{\sf{sec}}} %{\mathop{sec}}
\newcommand{\homology}[2]{H^{#1}_{#2}}
\newcommand{\bmhomology}[1]{H_{#1}}
\newcommand{\positivebmhomology}[1]{\;\!\!^+\!H_{#1}}
\newcommand{\cohomology}[2]{H^{#2}_{#1}}
\newcommand{\positivecohomology}[1]{\;\!\!^+\!H^{#1}}
\newcommand{\ambientspace}{E}
\newcommand{\sensedspace}{C}
\newcommand{\paramspace}{P}
\newcommand{\subparamspace}{Q}
\newcommand{\fiberspace}{\R^{n}}
\newcommand{\manifold}{M}
\newcommand{\timedomain}{\R}%(\infty,\infty)}
\newcommand{\CONES}{{\sf{Cones}}}  %{\mathop{Cones}}
\newcommand{\positivecohomologysheaf}[1]{\;\!\!^+\!\mathcal{H}^{#1}}
\newcommand{\positivehomologysheaf}[1]{\;\!\!^+\!\mathcal{H}_{#1}}
\newtheorem*{thm:positive.alexander.duality}{Theorem \ref{thm:positive.alexander.duality}}
\newtheorem*{cor:criterion}{Corollary \ref{cor:criterion}}
\newtheorem*{cor:local}{Corollary \ref{cor:local}}
\newtheorem{thm}{Theorem}[section]
\newtheorem{cor}[thm]{Corollary}
\newtheorem{lem}[thm]{Lemma}
\newtheorem{prop}[thm]{Proposition}
\newtheorem*{lem*}{Lemma}
\newtheorem*{prob*}{Problem}
\theoremstyle{definition}
\newtheorem{defn}[thm]{Definition}
\newtheorem{eg}[thm]{Example}
\title{Positive Alexander duality for pursuit and evasion}
\author{Robert Ghrist}
\address{Departments of Mathematics \& Electrical/Systems Engineering, Univ. Pennsylvania}
\email{ghrist@math.upenn.edu}
\subjclass{Primary:55N30, 55S40, 55U30  Secondary: 57R19, 90C48, 91A44}
\thanks{Authors supported by US DoD contracts FA9550-12-1-0416 and N00014-16-1-2010.}
\author{Sanjeevi Krishnan}
\address{Department of Mathematics, Ohio State University}
\email{sanjeevi@math.osu.edu}
\begin{document}
\begin{abstract}
Considered is a class of pursuit-evasion games, in which an evader tries to avoid detection. Such games can be formulated as the search for sections to the complement of a coverage region in a Euclidean space over a timeline. Prior results give homological criteria for evasion in the general case that are not necessary {\it and} sufficient. This paper provides a necessary and sufficient {\it positive cohomological} criterion for evasion in a general case. The principal tools are (1) a refinement of the \v{C}ech cohomology of a coverage region with a positive cone encoding spatial orientation, (2) a refinement of the Borel-Moore homology of the coverage gaps with a positive cone encoding time orientation, and (3) a positive variant of Alexander Duality.  Positive cohomology decomposes as the global sections of a sheaf of local positive cohomology over the time axis; we show how this decomposition makes positive cohomology computable as a linear program. 
%Such a computation is demonstrated for a prototypical case of evasion that eludes previously known homological criteria. 
% SEPARATE FOLLOW-UP CONFERENCE PAPER
%We close by showing how to obtain the data for computation of the criterion from a Vietoris-Rips approximation to the coverage region as might be given by a sensor or communications network. 
\end{abstract}
\maketitle
% -----------------------------------------------------------------------------------------------------------------
\section{Introduction}
\label{sec:intro}
The motivation for this paper comes from a type of \textit{pursuit-evasion game}. In such games, two classes of agents, \textit{pursuers} and \textit{evaders} move in a fixed geometric domain over time. The goal of a pursuer is to capture an evader (by, e.g., physical proximity or line-of-sight). The goal of an evader is to move in such a manner so as to avoid capture by any pursuer. This paper solves a feasibility problem of whether an evader can win in a particular setting under certain constraints.

We specialize to the setting of pursuers-as-sensors, in which, at each time, a certain region of space is ``sensed'' and any evader in this region is considered captured. Evasion, the successful evasion of all pursuers by an evader, corresponds to gaps in the sensed region over time. We formalize this setting in the form of a \textit{coverage pair} $(\ambientspace,\sensedspace)$ over the time-axis $\timedomain$: one has, for each time $t$, the ambient space $\ambientspace_t\cong\real^n$ and a sensed or \textit{covered} region $\sensedspace_t\subset\ambientspace_t$. The ambient space and covered regions at each point in time are bundled into an ambient space-time $\ambientspace\cong\real^n\times\real$ and a covered subspace $\sensedspace\subset\ambientspace$.
%At each time $t\in\timedomain$, the fiber $\sensedspace_t$ is the subspace in which an evader is caught. The complement $\ambientspace_t\setmin\sensedspace_t$ is the subspace safe for an evader. 
The goal of the evader is to construct an \textit{evasion path}, a section to the restriction $\ambientspace\setmin\sensedspace\ra\timedomain$ of the projection map $\ambientspace\ra\timedomain$. 

This is not necessarily a topological problem. However, epistemic restrictions on $\sensedspace$ are natural in real-world settings. For example, a distributed sensor network may not directly perceive its geometric coverage region $\sensedspace$ but can often infer its (co)homology from local computations. The present paper focuses on the feasibility of evasion: 

\begin{prob*}
  Is there an ``evasion criterion'' for the existence of a section to the complement
  \begin{equation*}
    \ambientspace\setmin\sensedspace\ra\timedomain,
  \end{equation*}
  based on a coordinate-free description of $\sensedspace$?
\end{prob*}

Work of Adams and Carlsson gives a geometric criterion for the plane ($n=2$, see below); there is no other known sharp criterion. The problem is one of topological duality. 
%It is trivial to answer if one wants to find a section of $\sensedspace$, but difficult for the complement. 
% [I DISAGREE THAT IT'S TRIVIAL: PERSISTENT HOMOLOGY WOULD EVEN FAIL... HOW ABOUT THE FOLLOWING?]
While the (timewise) \textit{unstable} homotopy theory of $\ambientspace\setmin\sensedspace$ determines the existence of an evasion path, currently only the less informative (timewise) \textit{stable} homotopy theory of $\ambientspace\setmin\sensedspace$ admits a duality with the (timewise) stable homotopy theory of the input space $\sensedspace$.
% RG : I DON'T REALLY UNDERSTAND WHAT THAT MEANS, TBH.

\subsection{Background}
Different formulations of pursuit-evasion games suggest different methods of analysis. Combinatorial descriptions of the domain suggest methods from the theory of graphs \cite{ParsonsPursuit78} and cellular automata \cite{BCKHigh08,CGCyclic14}. Geometric descriptions of agent behavior suggest methods from differential equations and differential game theory \cite{IsaacsDifferential65}, computational geometry \cite{GLL+visibility99,SGallSolution01}, probability theory \cite{HKSMultiple99,VSK+Probabilistic02}, and Alexandrov geometry \cite{ABGCapture09,ABGTotal10}. Coordinate-free descriptions of pursuer behavior model the data available to ad-hoc networks of non-localized sensors, to which coordinate geometry is unavailable; such descriptions suggest methods from topology \cite{ACEvasion14,SGCoordinate06,SGCoverage07}.

The first topological criterion for evasion is based on the homologies of the fibers $\sensedspace_t$ relative their topological boundaries in $\ambientspace_t$ \cite{SGCoordinate06}.
Assume for convenience that coverage gaps are uniformly bounded in the sense that there is a sufficiently large, closed ball $B\subset\fiberspace$ such that $\fiberspace\setmin B\subset C_t$ at each time $t\in\timedomain$. The topological boundary $F$ of this ball $B$ in $\R^n$ encircles all potential losses of coverage like a {\it fence}; this fence naturally determines a homology class $[F]$ in $H_{n-1}\sensedspace$. The homological criterion says that if there is an evasion path, then $[F]\neq 0$. This criterion for evasion, while necessary, is not sufficient [Figure \ref{fig:conservative}].

% ###################################################
\begin{figure}[h]
%  \begin{tabular}{ccc}
\begin{center}
  \includegraphics[width=5.5in]{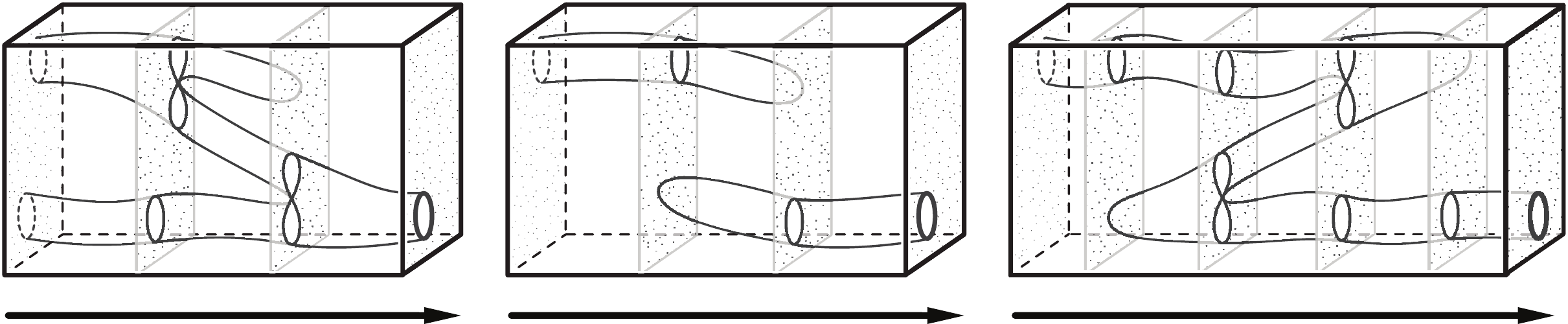} 
%      \includegraphics[width=40mm,height=30mm]{evasion-success.png} 
%    & \includegraphics[width=40mm,height=30mm]{dSG-success-II.png} 
%    & \includegraphics[width=40mm,height=30mm]{evasion-failure.png} 
%  \end{tabular}
  \caption{Three coverage pairs over $\timedomain$, all planar examples ($n=2$). Evasion is possible in the first case only [left]. The simple homological criterion rules out evasion in the second [middle], but not third [right], case.}
  \label{fig:conservative}
\end{center}
\end{figure}
% ###################################################

Another topological criterion for evasion follows from Alexander Duality (classical or parametrized \cite{Kalisnik-VerovsekAlexander13}), a duality between the cohomology of the fiberwise coverage regions $\sensedspace_t$ and the homology of the complementary coverage gaps $\ambientspace_t\setmin\sensedspace_t$ \cite{ACEvasion14}. One such criterion states that if there is an evasion path, then there exists a persistent barcode in the {\it zigzag persistent homology} \cite{CSZigzag10} of $\sensedspace$. This criterion for evasion, while necessary, is not sufficient [Figure \ref{fig:conservative}, right]. The main problem, as observed in \cite{ACEvasion14}, is that the fiberwise homotopy type of $\sensedspace$ fails to detect enough information about the embedding $\sensedspace\ira\ambientspace$, and hence $\ambientspace\setmin\sensedspace$, to deduce evasion: see Figures \ref{fig:works} and \ref{fig:noworks}.

A necessary \textit{and} sufficient refinement of the previous criterion exists for the special case where $n=2$ and $\sensedspace_t$ is a connected union of balls at all times $t$ that only changes topology at finitely many points $t$ in time \cite{ACEvasion14}. In this case, an algorithm iteratively constructs an evasion path at each time $t$ of topology change (via a Reeb graph) based on the affine and orientation structures $\sensedspace_t$ inherits from $\R^{n}$. An open question in that paper is whether there is a generalization that does not require information about affine structure.

% - - - - - - - - - - - - -
\subsection{Contributions}
% - - - - - - - - - - - - -
We give a necessary, sufficient, and tractable cohomological criterion for evasion that affirmatively answers this open question in a general case and arbitrary dimension. The (co)homology theories of parametrized spaces used in this paper, at least in the cases considered in the main results, are equivalent to real Borel-Moore homology having proper supports and real \v{C}ech cohomology.
% [THE ABOVE IS MORE UNDERSTANDABLE TO MORE PEOPLE THAN SHEAF (CO)HOMOLOGY, BUT IF YOU DON'T LIKE IT THEN PLEASE REVERT...]
%The (co)homology theories used in this paper are special cases of sheaf (co)homology, for constant sheaves of real numbers on parametrized spaces. 
A positive cone $\positivebmhomology{q}\paramspace$ inside the homology $\bmhomology{q}\paramspace$ of a space $\paramspace$ over $\R^q$, reminiscent and sometimes a special case of cones in the homology of a \textit{directed space} \cite{grandis2004inequilogical}, encodes some information about the parametrization. A positive cone $\positivecohomology{k}\paramspace$ inside the cohomology $\cohomology{}{k}\paramspace$ of a space $\paramspace$ over $\R^q$, whose fibers are endowed with the additional structure of pro-objects in a category of oriented $k$-manifolds, encodes some information about those orientations. The main result is the following positive variant of Alexander Duality.

\begin{thm:positive.alexander.duality}
  For coverage pair $(\ambientspace,\sensedspace)$ over $\R^q$ with $\dim E>2$ and $0\leqslant q\leqslant\dim\ambientspace-2$,
  \begin{equation*}
    \positivecohomology{\dim\ambientspace-q-1}\sensedspace\cong\positivebmhomology{q}(\ambientspace\setmin\sensedspace)
  \end{equation*}
\end{thm:positive.alexander.duality}

Along the way, we investigate properties of positive (co)homology.
We show that positive homology: (1) preserves certain limits [Lemma \ref{lem:positive-exactness}] and colimits [Lemma \ref{lem:proper.supports}] and (2) characterizes the existence of sections [Lemmas \ref{lem:sections.determine.positivity}, \ref{lem:positivity.determines.sections}] in degree $1$.
We define positive cohomology in stages, for: (1) oriented manifolds-with-compact-boundaries; then (2) pro-objects thereof; and finally (3) parametrized spaces whose fibers have the additional structure of pro-objects of oriented manifolds-with-compact-boundaries.
Consequently, we conclude a complete criterion for evasion by considering Positive Alexander Duality in homological degree $q=1$.

\begin{cor:criterion}
  For a coverage pair $(\ambientspace,\sensedspace)$ over $\R$ with $\dim E>2$,
  \begin{equation}
    \positivecohomology{\dim\ambientspace-2}\sensedspace\neq\varnothing
  \end{equation}
  if and only if the restriction $\ambientspace\setmin\sensedspace\ra\timedomain$ of the projection $\ambientspace\ra\timedomain$ admits a section.
\end{cor:criterion}

This complete criterion for evasion assumes a minimum of information about the coverage region.
Unlike \cite{ACEvasion14}, we neither assume that our coverage region has the homotopy type of a CW complex, nor restrict the ambient spatial dimension of the region to $\dim\ambientspace_t=\dim\ambientspace-1=2$, nor restrict the manner in which the topology of our coverage region changes in time. In addition, the criterion is only based on the \v{C}ech cohomology and orientation data of the coverage region.

Moreover, this evasion criterion is efficiently computable. 
The positive cohomology of a coverage region over $\R$ decomposes as the global sections of a sheaf of local positive cohomology cones.

\begin{cor:local}
  For coverage pair $(\ambientspace,\sensedspace)$ over $\timedomain$ and collection $\mathscr{O}$ of open subsets of $\timedomain$,
  \begin{equation*}
    \positivecohomology{\dim\ambientspace-2}\sensedspace=\lim_{U\in\mathscr{O}}\positivecohomology{\dim\ambientspace-2}\sensedspace_U.
  \end{equation*}
\end{cor:local}

Abstractly, the global sections of a cellular sheaf of finitely generated, positive cones over a finitely stratified space is computable as a linear programming problem. We can therefore demonstrate the tractability of the evasion criterion with a prototypical pair of examples in \S\ref{sec:examples}.

We use the language of sheaf theory throughout, as part of a broader goal in introducing sheaf-theoretic methods in applied and computational topology. 
Below we fix some conventions in notation and terminology regarding sheaves, spaces, and cones.

% - - - - - - - - - - - - -
%\subsection{Conventions}
% - - - - - - - - - - - - -

% - - - - - - - - - - - - -
\subsubsection{Topology}
% - - - - - - - - - - - - -
Let $X$ denote a space.
We write $\closed(X)$ and $\compact(X)$ for the respective sets of closed and compact subsets of $X$.
We write $\pathcomponents{X}$ for the set of path-components natural in $X$.
For $B\subset X$ and a collection $\Psi$ of subsets of $X$, we write $\Psi|B$ and $\Psi\cap B$ for
\begin{equation*}
  \Psi|B=\{A\in\Psi\;|\;A\subset B\},\quad \Psi\cap B=\{A\cap B\;|\;A\in\Psi\}
\end{equation*}

A \textit{manifold-with-compact-boundary} is a manifold-with-boundary whose boundary is compact.
Let $\MANIFOLDS_n$ denote the category of non-compact, connected, oriented manifolds-with-compact boundary and oriented embeddings $\psi:M\ra N$ between such oriented manifolds with $\psi(M)\subset N\setmin\partial N$.

A \textit{space over $X$} is a space $\paramspace$ equipped with a map $\projection_\paramspace\colon\paramspace\ra X$. 
A \textit{section} to a space $\paramspace$ over $X$ is a section to $\projection_\paramspace$. For in inclusion $\iota:U\ira V$ of spaces and a space $\paramspace$ over $V$, we write $\paramspace_U$ for the space over $U$ defined by the pullback square
\begin{equation*}
  \xymatrix{
      \paramspace_U\ar[r]\ar[d]_{\projection_{\paramspace_U}}\pullbackcorner & \paramspace\ar[d]^{\projection_\paramspace}
    \\
      U\ar[r]_\iota &
      V
  }
\end{equation*}
A \textit{pair of spaces over $X$} is a pair $(\paramspace,\subparamspace)$ of spaces over $X$ with $\subparamspace$ a subspace of $\paramspace$ and $\projection_\subparamspace$ the pullback of $\projection_\paramspace$ along inclusion $\subparamspace\ira\paramspace$.
A space $\paramspace$ over $X$ is \textit{proper} if $\projection_\paramspace$ is proper.
A \textit{cube complex over $\R^q$} is a locally finite union $\paramspace$ of isothetic (axis-aligned) compact hyperrectangles in $\R^{n+q}$ with $\projection_\paramspace$ projection onto the last $q$ coordinates.
For a space $\paramspace$ over $X$, let $\sections_\paramspace$ denote the sheaf on $X$ sending each open $U\subset X$ to the set of sections to $\paramspace_U$.

% - - - - - - - - - - - - -
\subsubsection{Sheaves}
% - - - - - - - - - - - - -
Fix a space $X$ and a collection $\Psi$ of closed subsets of $X$ closed under intersection.
Let $\constantsheaf{X}{V}$ denote the constant sheaf on $X$ taking values in an object $V$.
Let $\sheaf{1}_{(A)}$ denote the pullback of a sheaf $\sheaf{1}$ on $X$ to $A\subset X$.
Let $\globalsections{}\sheaf{1}$ denote the global sections of a sheaf $\sheaf{1}$.

We fix some constructions and notation for Abelian sheaf theory; refer to 
\cite{bredon2012sheaf} for details.
Consider a sheaf $\sheaf{1}$ of real vector spaces on $X$.
Let $\globalsections{\Psi}\sheaf{1}$ denote the global sections of $\sheaf{1}$ whose supports lie in the collection $\Psi$ of subsets of the base space.
For an injective resolution $\mathcal{I}_\Grading$ of $\sheaf{1}$, we write $\homology{\Psi}{\Grading}$ and $\cohomology{\Psi}{\Grading}$ for
\begin{align*}
    \homology{\Psi}{\Grading}\sheaf{1}&=\homology{}{\Grading}\globalsections{\Psi}(U\mapsto(\globalsections{\compact|U}{\mathcal{I}_\Grading})^*)\\
    \cohomology{\Psi}{\Grading}\sheaf{1}&=\cohomology{}{\Grading}\globalsections{\Psi}{\mathcal{I}_\Grading},
\end{align*}

For simplicity, we do not define relative sheaf (co)homology and instead restrict our attention to pairs $(X,A)$ on which the relative theory reduces to an absolute theory.
Fix closed $A\subset X$ and suppose $\Psi$ is a collection of paracompact subsets of $X$ which each admit a neighborhood in $X$ which is an element in $\Psi$.
There exists a long exact sequence
\begin{equation}
  \label{eqn:cohomological.les}
     \cdots\ra
    \cohomology{\Psi}{\Grading}\sheaf{1}
      \ra
    \cohomology{\Psi\cap A}{\Grading}\sheaf{1}_{(A)}
      \xra{\partial^\Grading}
    \cohomology{\Psi|X\setmin A}{\Grading+1}\sheaf{1}_{(X\setmin A)}=\cohomology{\Psi|X\setmin A}{\Grading+1}\sheaf{1}
      \ra\cdots,
\end{equation}
\cite[Proposition 12.3]{bredon2012sheaf}.
If additionally $X$ has finite cohomological dimension (in the sense that the compactly supported cohomology of open subsets of $X$ vanishes for large enough degrees \cite{bredon2012sheaf}), then there exists a long exact sequence
\begin{equation}
  \label{eqn:homological.les}
     \cdots\ra
    \homology{\Psi}{\Grading}\sheaf{1}
      \ra
    \homology{\Psi\cap(X\setmin A)}{\Grading}\sheaf{1}_{(X\setmin A)}=\homology{\Psi\cap(X\setmin A)}{\Grading}\sheaf{1}
      \xra{\partial_{\Grading-1}}
    \homology{\Psi|A}{\Grading-1}\sheaf{1}_{(A)}
      \ra\cdots
\end{equation}
\cite[(95),\S13.2]{bredon2012sheaf}.
In both exact sequences, the unlabelled arrows are evident inclusions and restrictions.

For $X$ an oriented $n$-manifold in $\Psi$, we write $\Delta$ for Poincar\'{e} Duality
\begin{equation*}
    \Delta:\cohomology{\Psi}{n-p}\sheaf{1}=\homology{\Psi}{p}\sheaf{1},\quad q=0,1,\ldots,n
\end{equation*}

% - - - - - - - - - - - - -
\subsubsection{Cones}
% - - - - - - - - - - - - -
In this paper, a \textit{cone} is a subset of a real vector space $V$ closed under vector addition and closed under scalar multiplication by $\R^+$, the positive real numbers. A cone $K$ in $V$ is \textit{positive} if $K\cap -K=\varnothing$. 
%
%The simplest, non-trivial example of a positive one is a \textit{ray}, the set of all positive scalar multiples of a non-zero vector in $V$.
%
%\begin{eg}
%  Positive real numbers $\R^+$ form a ray inside the line $\R$.
%\end{eg}
%
%We write $\freepositivecone{X}$ for the positive cone generated by a set $X$ in the free vector space $\R[X]$ having basis $X$. A cone $K$ is \textit{polyhedral} if it is the convex closure of finitely many rays.
Let $\CONES$ denote the category whose objects are cones and whose morphisms are functions between cones that are restrictions and corestrictions of linear maps between vector spaces generated by the cones. 

\begin{eg}
  For real vector spaces $V$ and $W$,
  \begin{equation*}
      \CONES(V,W)
  \end{equation*}
  is the set of linear maps $V\ra W$ and hence can be regarded itself as a real vector space with vector addition and scalar multiplication defined point-wise.
\end{eg}

For cones $V^+,W^+$ in real vector spaces $V,W$ with $V$ generated by $V^+$, we regard
\begin{equation}
  \label{eqn:cone-hom}
  \CONES(V^+,W^+)\subset\CONES(V,W)
\end{equation}
as a cone inside the vector space $\CONES(V,W)$ under the injection $\CONES(V^+,W^+)\ra\CONES(V,W)$ sending a cone map to its unique linear extension.

% - - - - - - - - - - - - -
\subsection{The Formal Problem}%\label{sec:conventions}
% - - - - - - - - - - - - -
The setting for pursuit-evasion in this paper is a \textit{coverage pair}.

\begin{defn}
  \label{defn:coverage.pair}
  \textit{A coverage pair over a space $T$} is a pair $(\ambientspace,\sensedspace)$ of spaces over $T$ such that:
  \begin{enumerate}
    \item $\ambientspace$ is a real vector bundle over $T$; and
    \item $\sensedspace$ is a closed and connected subspace of $\ambientspace$; and
    \item The closure of $\ambientspace\setmin\sensedspace$ in $\ambientspace$ is proper.
  \end{enumerate}
\end{defn}

We make some immediate remarks on the definition.
Firstly, we stress the connectedness implicit in the definition of a coverage pair: as noted in \cite{ACEvasion14}, this is a necessary condition for evasion criteria. 
Secondly, the ambient space $\ambientspace$ of a coverage pair $(\ambientspace,\sensedspace)$ over a contractible space $T$ is homeomorphic to $\R^n\times T$; for the case $T=\R^q$, we always take $\ambientspace$ to be an oriented manifold.
Thirdly, the motivating example of $T$ is $T=\timedomain$ --- the setting for {\em temporal} evasion problems. To the extent possible, we develop our machinery for more general base spaces with an eye towards future uses in evasion-type problems over directed or partially-directed parameter spaces or state spaces.

We wish to regard the fibers of $\sensedspace$ as oriented.
Hence we generalize orientations for general spaces as the structure of \textit{pro-objects} in a category of oriented manifolds.
We identify each fiber $\sensedspace_t$ of a coverage region $\sensedspace$ in a coverage pair $(\ambientspace,\sensedspace)$ over $T$ with the pro-object in $\MANIFOLDS_{\dim\ambientspace_t}$ represented by the inverse system of closed neighborhoods of $\sensedspace_t$ in $\ambientspace_t$ which are objects in $\MANIFOLDS_{\dim\ambientspace_t}$.

% -----------------------------------------------------------------------------------------------------------------
\section{Homology}\label{sec:homology}
% -----------------------------------------------------------------------------------------------------------------

In this section, let $\paramspace$ denote a space with finite cohomological dimension over another space such that each proper subspace of $\paramspace$ is paracompact and admits a proper neighborhood in $\paramspace$. 
A motivating example of $\paramspace$ is the complement $\ambientspace\setmin\sensedspace$ of the coverage region in the ambient space of a coverage pair $(\ambientspace,\sensedspace)$ over Euclidean space.
For each such $\paramspace$, let $\proper(\paramspace)$ denote the collection of proper subspaces of $\paramspace$ and define
\begin{equation*}
  \bmhomology{\Grading}\paramspace=\homology{\proper(\paramspace)}{\Grading}(\constantsheaf{\paramspace}{\R}).
\end{equation*}
(This variant of homology differ subtly from the usual singular and Borel-Moore theories.)

\begin{eg}
  For a cube complex $\paramspace$ over $\R$ and real numbers $x<y$,
  \begin{equation*}
    \bmhomology{\Grading}\paramspace_{(x,y)}=\homology{\sing}{\Grading}(\paramspace_{[x,y]},\paramspace{\{x,y\}}),
  \end{equation*}
  where $\homology{\sing}{\Grading}$ denotes real singular homology.
\end{eg}

The construction $\bmhomology{\Grading}$ is contravariant in maps of the form $\paramspace_U\ira\paramspace$, for $U$ an open subspace of the codomain of $\projection_\paramspace$, and covariant in maps of the form $\paramspace_A\ira\paramspace$, for $A$ a closed subspace of the codomain of $\projection_\paramspace$.
For $U$ an open subspace of the space to which $\paramspace$ maps, the long exact sequence (\ref{eqn:homological.les}) specializes to the following long exact sequence, in which the unlabelled arrows are induced from inclusions and restrictions.
\begin{equation*}
  \cdots\ra\bmhomology{k}\paramspace_U\xra{\partial_k}\bmhomology{k-1}(\paramspace\setmin \paramspace_U)\xra{\bmhomology{k-1}(\paramspace\setmin\paramspace_U\ira \paramspace)}\bmhomology{k-1}\paramspace\ra\cdots
\end{equation*}

% - - - - - - - - - - - - -
\subsection{Positive homology definition}
% - - - - - - - - - - - - -

% ###################################################
\begin{figure}[h]
\begin{center}
  \includegraphics[width=3.75in]{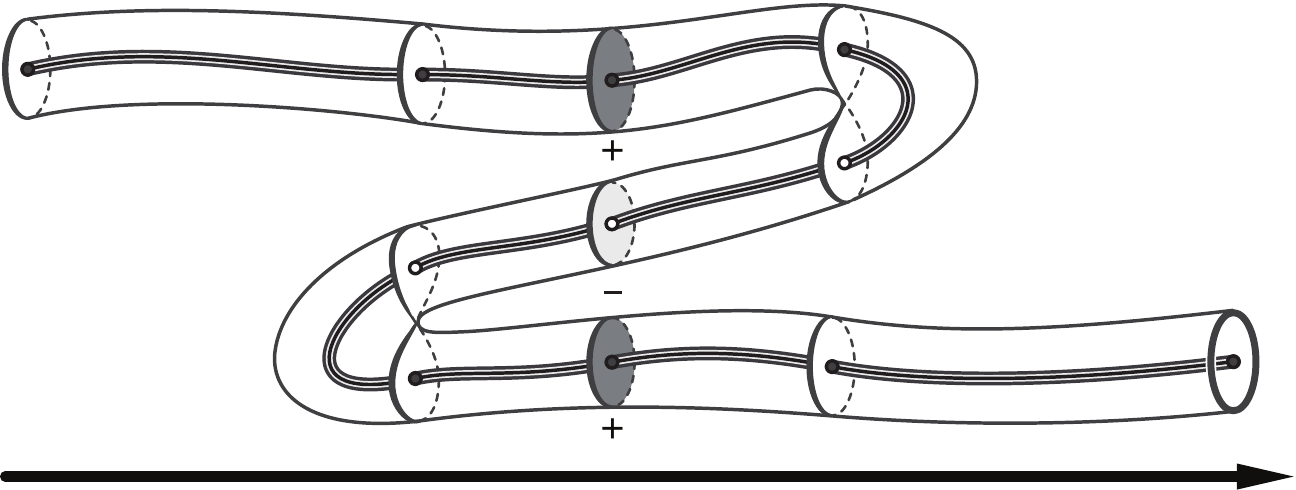} 
  \caption{{\bf Non-Positive Homology Class} The path at the core of the solid tube $\paramspace$ over $\timedomain$ represents an element in $\bmhomology{1}\paramspace$ that does not lie in the positive cone $\positivebmhomology{1}\paramspace$ because the path intersects a fiber negatively.}
  \label{fig:positive.bmhomology}
\end{center}
\end{figure}
% ###################################################

Our goal is to define a positive homology that remembers directedness: see Figure \ref{fig:positive.bmhomology}.  To that end, we observe the following convoluted reinterpretation of ordinary $H_0$ of a space $X$ over a point:
\begin{equation*}
  \bmhomology{0}X=\colim_{\compact\ K\subset X}\CONES(\globalsections{}{(\constantsheaf{K}{\R})},\R) .
\end{equation*}

This points to a simple modification for definition positive homology in degree zero:

\begin{defn}
  \label{defn:positive-homology-zero}
  Define $\positivebmhomology{0}\paramspace$ to be the cone
  \begin{equation*}
    \positivebmhomology{0}\paramspace=\colim_{\compact\ K\subset\paramspace}\CONES(\globalsections{}{(\constantsheaf{K}{\R^+})},\R^+)
  \end{equation*}
  natural in spaces $\paramspace$ (over $\R^0$).
\end{defn}

Recall that we can regard $\positivebmhomology{0}\paramspace$ to be a cone inside $\bmhomology{0}\paramspace$ by (\ref{eqn:cone-hom}).

\begin{eg}
  For a CW complex $\paramspace$ over a point, $\positivebmhomology{0}\paramspace$ is the positive cone generated by 
  \begin{equation*}
    \pi_0\paramspace\subset\R[\pi_0\paramspace]=\bmhomology{0}\paramspace.
  \end{equation*}
  and hence remembers unstable homotopical information about $\paramspace$ inside the stable homotopical invariant $\bmhomology{0}\paramspace$.
\end{eg}

We may then define positive homology in arbitrary degree over more general base spaces (open subsets of a directed $\R^q$) inductively, using pullback squares.  
Given a space $\paramspace$ over an open $U\subset\R^{q}$, we write $\paramspace_t$ for the space over $\R^{q-1}$ such that $\projection_{\paramspace_t}$ is the composite
$$\paramspace_{\{t\}\times \R^q}\ra\{t\}\times\R^{q-1}\ra\R^{q-1}$$
of a restriction and corestriction of $\projection_\paramspace$ with projection $\{t\}\times\R^{q-1}\ra\R^{q-1}$ onto the last $q-1$ factors and $\paramspace_{(-\infty,t)}$ for $\paramspace_{U\cap((-\infty,t)\times\R^{q-1})}$.

\begin{defn}
  \label{defn:positive-homology}
  For positive integers $q$, define $\positivebmhomology{q}\paramspace$ by the pullback square
  \begin{equation}
    \label{eqn:positive-homology}
    \xymatrix@C=12pc{
        **[l]\positivebmhomology{q}\paramspace\ar[r]\ar[d]\pullbackcorner
      & **[r]\prod_{t}\positivebmhomology{q-1}\paramspace_{t}\ar[d]
      \\
        **[l]\bmhomology{q}\paramspace\ar[r]_-{\prod_{t}\bmhomology{q}\paramspace\xra{\bmhomology{q}(\paramspace_{(-\infty,t)}\subset\paramspace)}\bmhomology{q}\paramspace_{(-\infty,t)}\xra{\partial_q}\bmhomology{q-1}\paramspace_t}
      & **[r]\prod_{t}\bmhomology{q-1}\paramspace_{t},
    }
  \end{equation}
  where $t$ denotes an element in $\R$ such that $\{t\}\times\R^{q-1}\cap U\neq\varnothing$, natural in spaces $\paramspace$ over open $U\subset\R^q$, in $\CONES$.
\end{defn}

\begin{eg}
  We are particularly interested in $\positivebmhomology{1}$. In this case, (\ref{eqn:positive-homology})
%  \begin{equation}
%    \label{eqn:positive-homology}
%    \xymatrix@C=12pc{
%        **[l]\positivebmhomology{1}\paramspace\ar[r]\ar[d]\pullbackcorner
%      & **[r]\prod_{x\in U}\positivebmhomology{0}\paramspace_{x}\ar[d]
%      \\
%        **[l]\bmhomology{1}\paramspace\ar[r]_-{\prod_{x\in \R}\bmhomology{1}\paramspace\xra{\bmhomology{1}(\paramspace_{(-\infty,x)}\subset\paramspace)}\bmhomology{1}\paramspace_{(-\infty,x)}\xra{\partial_1}\bmhomology{0}\paramspace_x}
%      & **[r]\prod_{x\in U}\bmhomology{0}\paramspace_{x} 
%    }
%  \end{equation}
  is easy to interpret for $\paramspace$ an open subspace of a vector bundle over $\R$: a positive homology class is represented by a locally finite singular $1$-cycle, whose $1$-chains are oriented in the sense that their composites with $\projection_\paramspace$ are non-decreasing maps $\I\ra\timedomain$.
  %a section of the base $\R$. This is enforced by the pullback square: the image of the boundary map $\partial_1\colon\bmhomology{1}\paramspace_{(-\infty,x)}\to\bmhomology{0}\paramspace_x}$ must match the inclusion of $\positivebmhomology{0}\paramspace_x$ into $\bmhomology{0}\paramspace$ for each $x\in\R$.
  % THIS MAY BE QUIBBLING ABOUT SEMANTICS, BUT THERE MAY BE NO SECTIONS TO P AND POSITIVE H1 STILL HAS ELEMENTS THAT ARE "ALMOST SECTIONS".
  Figure \ref{fig:positivehomologyexamples} illustrates four examples of $\positivebmhomology{1}$ of a 1-d space over $\R$.  
\end{eg}

% ###################################################
\begin{figure}[h]
\begin{center}
  \includegraphics[width=5.5in]{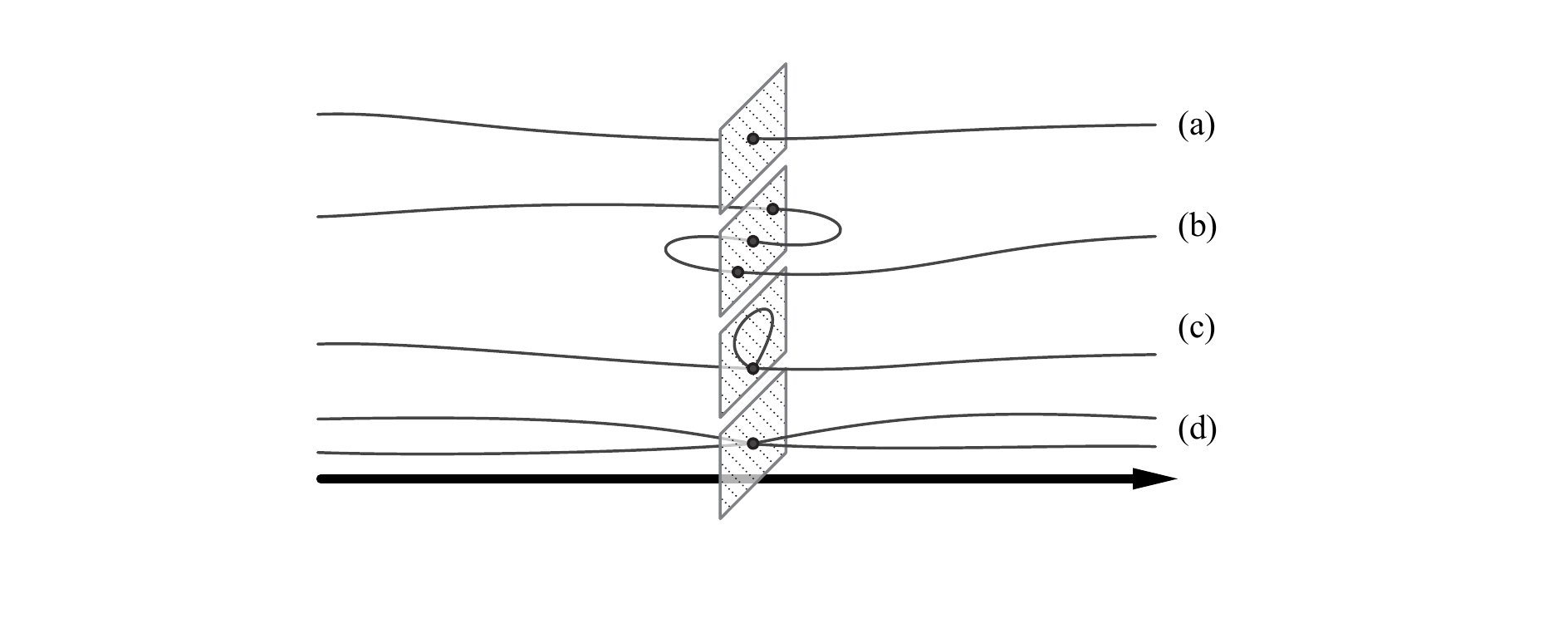} 
  \caption{{\bf Positive 1-homology cones:} Four simple examples of 1-d spaces over $\R$. (a) $\bmhomology{1}\cong\R$ and $\positivebmhomology{1}\cong\R^+$. (b) Here, also, $\bmhomology{1}\cong\R$, but, as there is a reversal in the path, $\positivebmhomology{1}=\emptyset$. (c) With the wedged circle, $\bmhomology{1}\cong\R\times\R$ and $\positivebmhomology{1}\cong\R^+\times\R$. (d) In this last example, there are four distinct sections over the base space and $\positivebmhomology{1}$ is a cone with four vectors as a basis. However, $\bmhomology{1}\cong\R^3$, since only three are linearly independent. Thus $\positivebmhomology{1}$ is a cone in $\R^3$ generated by four vectors.}
  \label{fig:positivehomologyexamples}
\end{center}
\end{figure}
% ###################################################

\begin{eg}
  \label{eg:spheres}
  For the space $B$ over $\R^q$ such that $\projection_B=\id_{\R^q}$, $\positivebmhomology{q}B=\R^+$.
\end{eg}

A homology cone is always positive.
Hence we call $\positivebmhomology{\Grading}\paramspace$ the \textit{positive homology} of $\paramspace$.
Positive homology inherits from $\bmhomology{\Grading}$ a covariance in closed inclusions of spaces over $\timedomain$ and contravariance in inclusions of the form $\paramspace_U\ira\paramspace_V$ for spaces $\paramspace$ over $\timedomain$ and open $U\subset V\subset\timedomain$.

% - - - - - - - - - - - - -
\subsection{Positive homology limits and colimits}
% - - - - - - - - - - - - -

In the most relevant case of degree one homology of spaces over $\R$, $\positivebmhomology{1}$ defines a sheaf over $\timedomain$.

\begin{lem}
  \label{lem:positive-exactness}
  For a space $\paramspace$ over $\timedomain$ and collection $\mathscr{O}$ of open subsets of $\timedomain$,
  \begin{equation*}
    \positivebmhomology{1}\paramspace=\lim_{U\in\mathscr{O}}\positivebmhomology{1}\paramspace_U
  \end{equation*}
\end{lem}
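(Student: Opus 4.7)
The plan is to unwind the pullback definition of $\positivebmhomology{1}$ from Definition~\ref{defn:positive-homology} and exploit that limits commute with pullbacks, so the lemma reduces to a descent property for ordinary Borel--Moore homology $\bmhomology{1}$ together with a routine interchange of $\lim$ and $\prod$.

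First, I would observe that naturality of the connecting map $\partial_1$ in the long exact sequence~(\ref{eqn:homological.les}) with respect to the open inclusions $\paramspace_U\ira\paramspace$ ensures that the restriction $\bmhomology{1}\paramspace\ra\bmhomology{1}\paramspace_U$ sends $\positivebmhomology{1}\paramspace$ into $\positivebmhomology{1}\paramspace_U$ for each $U\in\mathscr{O}$. These maps are compatible in $U$, so they assemble into a canonical cone map
\[
\phi\colon\positivebmhomology{1}\paramspace\ra\lim_{U\in\mathscr{O}}\positivebmhomology{1}\paramspace_U,
\]
and the task is to show $\phi$ is an isomorphism.

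Next, since each $\positivebmhomology{1}\paramspace_U$ is by Definition~\ref{defn:positive-homology} a pullback indexed over $t\in U$, and limits commute with pullbacks, $\lim_U\positivebmhomology{1}\paramspace_U$ is the pullback of the diagram
\[
\lim_U\bmhomology{1}\paramspace_U\ra\lim_U\prod_{t\in U}\bmhomology{0}\paramspace_t\la\lim_U\prod_{t\in U}\positivebmhomology{0}\paramspace_t.
\]
Assuming $\mathscr{O}$ covers $\R$, for each $t\in\R$ the subcollection $\{U\in\mathscr{O}:t\in U\}$ is nonempty, and swapping $\lim$ with $\prod$ identifies the two product corners with $\prod_{t\in\R}\bmhomology{0}\paramspace_t$ and $\prod_{t\in\R}\positivebmhomology{0}\paramspace_t$ respectively. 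Matching this pullback against the one defining $\positivebmhomology{1}\paramspace$ then reduces the lemma to the descent identity $\bmhomology{1}\paramspace=\lim_{U\in\mathscr{O}}\bmhomology{1}\paramspace_U$.

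The main obstacle is precisely this descent claim for $\bmhomology{1}$. I would establish it by first invoking~(\ref{eqn:homological.les}) for binary open covers $\paramspace=\paramspace_U\cup\paramspace_V$ to obtain a Mayer--Vietoris exact sequence, and then controlling the only potentially obstructing term in degree two by the finite cohomological dimension of $\R$, which is $1$. Once the sheaf condition is verified for binary open covers, descent for an arbitrary open cover $\mathscr{O}$ follows by a standard \v{C}ech refinement argument, using that the presheaf $U\mapsto\bmhomology{1}\paramspace_U$ is automatically separated. Combined with the preceding pullback analysis, this yields $\positivebmhomology{1}\paramspace=\lim_{U\in\mathscr{O}}\positivebmhomology{1}\paramspace_U$.
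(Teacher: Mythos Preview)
Your overall architecture matches the paper's: both proofs express $\positivebmhomology{1}\paramspace_U$ as the pullback $\bmhomology{1}\paramspace_U\times_{\prod_t\bmhomology{0}\paramspace_t}\prod_t\positivebmhomology{0}\paramspace_t$, observe that the two product factors are sheaves in $U$ because products commute with limits, and then reduce everything to the claim that $U\mapsto\bmhomology{1}\paramspace_U$ is itself a sheaf on $\timedomain$. Your ``limits commute with pullbacks'' is exactly the paper's ``section-wise limits of sheaves are sheaves''.

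The gap is in your justification of the descent claim for $\bmhomology{1}$. In the Mayer--Vietoris sequence you propose, the obstruction to left-exactness at $\bmhomology{1}\paramspace_{U\cup V}$ is the connecting map out of $\bmhomology{2}\paramspace_{U\cap V}$, and this group is governed by the topology of the total space $\paramspace$, not of $\timedomain$. The cohomological dimension of $\timedomain$ being $1$ does not force $\bmhomology{2}\paramspace_{U\cap V}$ to vanish (take $\paramspace=T^2\times\timedomain$, for instance), nor does it by itself force the connecting map to be zero. What the low dimension of the base \emph{does} buy you is a degeneration in the Leray/hypercohomology spectral sequence for $\projection_\paramspace$, which is a different mechanism than the one you invoke; if you want to go this route you would need to argue at the level of derived pushforwards on $\timedomain$, not via a raw Mayer--Vietoris on $\paramspace$. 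Relatedly, your appeal to the presheaf being ``automatically separated'' is circular, since separatedness is part of the sheaf condition you are trying to establish.

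The paper sidesteps all of this by citing Bredon's First Fundamental Theorem of Sheaves \cite[Theorem~IV.2.1]{bredon2012sheaf}, which directly gives that $U\mapsto\bmhomology{1}\paramspace_U$ is a sheaf. You should either invoke that result or replace your Mayer--Vietoris sketch with a genuine spectral-sequence argument on the base.
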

\begin{proof}
  Let $\positivehomologysheaf{0},\homologysheaf_{0},\positivehomologysheaf{1},\homologysheaf_{1}$ be the presheaves on $\timedomain$ naturally defined by
  \begin{align*}
  \positivehomologysheaf{0}(U)&=\prod_{x\in U}\positivebmhomology{0}\paramspace_x,
  \;
  &\positivehomologysheaf{1}(U)&=\positivebmhomology{1}\paramspace_U, 
  \\
  \homologysheaf_0(U)&=\prod_{x\in U}\bmhomology{0}\paramspace_x,
  \;
  &\homologysheaf_1(U)&=\bmhomology{1}\paramspace_U.
  \end{align*}

  Let $\partial$ denote the sheaf map $\homologysheaf_{1}\ra\homologysheaf_{0}$ such that
  \begin{equation*}
    \partial_U=\prod_{x\in\timedomain}(\bmhomology{1}\paramspace_{U\cap(-\infty,x)}\xra{\partial_1}\bmhomology{0}\paramspace_x)\circ\bmhomology{1}(\paramspace_{U\cap(-\infty,x)}\subset\paramspace).
  \end{equation*}  
  
  There exists an isomorphism of presheaves
  \begin{equation}
    \label{eqn:positive.homology.sheaf}
    \homologysheaf_1\times_\partial\positivehomologysheaf{0}\cong\positivehomologysheaf{1}.
  \end{equation}

  The presheaves $\positivehomologysheaf{0}$ and $\homologysheaf_{0}$ are sheaves because products commute with limits.
  The presheaf $\homologysheaf_1$ is a sheaf by an application of the First Fundamental Theorem of Sheaves \cite[Theorem IV.2.1]{bredon2012sheaf}.
  Hence the left side of (\ref{eqn:positive.homology.sheaf}), and hence also the right side of (\ref{eqn:positive.homology.sheaf}), is a sheaf because section-wise limits of sheaves are sheaves.
\end{proof}

\begin{lem}
  \label{lem:proper.supports}
  Consider an open subspace $\paramspace$ of a vector bundle over $\timedomain$.
  Then
  \begin{equation}
    \label{eqn:proper.supports}
    \positivebmhomology{1}\paramspace=\colim_{K}\positivebmhomology{1}K,
  \end{equation}
  where the colimit is taken over all proper cube complexes $K\subset\paramspace$ over $\timedomain$.
\end{lem}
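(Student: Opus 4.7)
The plan is to work from the pullback square defining $\positivebmhomology{1}$, reduce to a cofinality argument for ordinary Borel--Moore homology, and handle the positivity condition via the compactness of fibers of proper cube complexes. Write $\mathcal{K}$ for the directed set of proper cube complexes $K\subset\paramspace$ over $\timedomain$, ordered by inclusion.

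First, establish cofinality: the complexes in $\mathcal{K}$ are cofinal in the family $\proper(\paramspace)$ of proper closed subspaces of $\paramspace$. Since $\paramspace$ is open in a vector bundle $E\to\timedomain\cong\R$, each proper closed $S\subset\paramspace$ has compact intersection with every bounded slab $\paramspace_{[m,m+1]}$. Cover each such intersection by finitely many small isothetic hyperrectangles lying entirely in $\paramspace$; assembling these across $m\in\Z$ yields a locally finite, proper cube complex containing $S$. A parallel fiberwise statement holds: for each $t\in\timedomain$, the fibers $\{K_t:K\in\mathcal{K}\}$ are cofinal in the compact subsets of $\paramspace_t$.

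Second, apply the sheaf-theoretic presentation of Borel--Moore homology with proper supports from \S\ref{sec:homology} together with the standard fact that $\Psi$-supported homology is the colimit of homologies supported in the individual elements of $\Psi$. Combined with the cofinality of Step 1, this gives
\[
  \bmhomology{\Grading}\paramspace \;\cong\; \colim_{K\in\mathcal{K}} \bmhomology{\Grading} K, \qquad \Grading=0,1,
\]
naturally in the inclusions $K\subset K'$ and $K\subset\paramspace$. Fiberwise, Definition \ref{defn:positive-homology-zero} together with the fiberwise cofinality yields $\positivebmhomology{0}\paramspace_t=\colim_{K\in\mathcal{K}}\positivebmhomology{0} K_t$.

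Third, exploit the pullback square in Definition \ref{defn:positive-homology}. A class in $\positivebmhomology{1}\paramspace$ amounts to a pair $(c,(p_t)_t)$ with $c\in\bmhomology{1}\paramspace$, each $p_t\in\positivebmhomology{0}\paramspace_t$ positive, and $\partial c=(p_t)_t$. By Step 2, $c$ is the image of some $c_K\in\bmhomology{1} K$. Then $p_t=\partial c_K$ is supported in the compact fiber $K_t$, so its positivity is witnessed already inside $\positivebmhomology{0} K_t$. Hence $(c_K,(p_t)_t)$ assembles via the pullback square for $K$ into a class of $\positivebmhomology{1} K$, giving surjectivity of the canonical cone map $\colim_K\positivebmhomology{1} K\to\positivebmhomology{1}\paramspace$. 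Injectivity reduces to the injectivity already established in Step 2, since the positive-part data is fully determined by $c$.

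The main obstacle is that an infinite product $\prod_t\positivebmhomology{0}\paramspace_t$ does not commute with filtered colimits, so the colimit formula cannot simply be pulled through the right-hand factor of the defining pullback square. What saves the argument is precisely the pullback formulation: positivity of $(p_t)_t$ need only be checked where $\partial c$ is supported, and once $c$ is realized inside a single $K\in\mathcal{K}$ the compact fibers $K_t$ supply a uniform family of compact witnesses of positivity. The remaining naturality bookkeeping against the cone morphisms of $\CONES$ is routine.
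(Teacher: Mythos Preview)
Your Step~3 contains a genuine gap. You assert that once $c\in\bmhomology{1}\paramspace$ is lifted to some $c_K\in\bmhomology{1}K$, the boundary $\partial c_K\in\bmhomology{0}K_t$ is automatically positive in $K_t$ because its image $p_t$ is positive in $\paramspace_t$. This inference fails: the map $\bmhomology{0}K_t\to\bmhomology{0}\paramspace_t$ is induced by $\pi_0K_t\to\pi_0\paramspace_t$, which need not be injective. If two components $A,B$ of $K_t$ lie in a single component of $\paramspace_t$, then $2[A]-[B]\in\bmhomology{0}K_t$ is not in $\positivebmhomology{0}K_t$, yet its image in $\bmhomology{0}\paramspace_t$ is a positive multiple of a single generator and hence lies in $\positivebmhomology{0}\paramspace_t$. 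So positivity in $\paramspace_t$ does \emph{not} pull back to positivity in $K_t$, and your sentence ``its positivity is witnessed already inside $\positivebmhomology{0}K_t$'' is exactly the unproven step.

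What is actually required---and what the paper's proof supplies---is an enlargement of the initial cube complex $L$ to a larger proper cube complex $M\supset L$ with the property that, for every $t$, the map $\bmhomology{0}M_t\to\bmhomology{0}\paramspace_t$ is injective on the image of $\bmhomology{0}L_t\to\bmhomology{0}M_t$. Equivalently, components of $L_t$ that merge in $\paramspace_t$ already merge in $M_t$; under this condition, positivity in $\paramspace_t$ \emph{does} pull back to positivity in $M_t$ on the relevant subspace. The substantive work is constructing such an $M$ simultaneously for all $t\in\timedomain$ while keeping $M$ a \emph{proper} cube complex: this needs a locally-finite assembly (the paper uses compact product neighborhoods $F_x\times I_x$ and a finite-cover argument on each closed $1$-cell of $\timedomain$). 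Your cofinality statements in Steps~1--2 are correct and are used, but they do not supply this enlargement; your closing paragraph correctly identifies that the product-versus-colimit issue is the obstacle, but misdiagnoses what resolves it.
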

\begin{proof}
  We take $\paramspace$ to be an open subspace of $\R^{n+1}$, with $\projection_\paramspace$ projection onto the last coordinate, without loss of generality by the triviality of all bundles over $\R$. 
  We conflate $\bmhomology{\Grading}$ with real singular homology on locally contractible subspaces of $\paramspace$, and in particular write $\bmhomology{1}(\paramspace_x,K_x)$ to denote the relative real singular homology of the pair $(\paramspace_x,K_x)$ for $K$ a cube complex.

  Consider a proper cube complex $L\subset\paramspace$ over $\R$.
  It suffices to construct a proper cube complex $M\subset\paramspace$ over $\R$ containing $L$ such that for each $x\in\R$, the restriction of $(M_x\subset\paramspace_x)_*:\bmhomology{0}M_x\ra\bmhomology{0}\paramspace_x$ to the image of $(L_x\subset M_x)_*:\bmhomology{0}L_x\ra\bmhomology{0}M_x$ is injective.
  For then the top horizontal arrow in (\ref{eqn:positive-homology}) (for the case $U=\timedomain$) restricts and corestricts to a dotted arrow making the diagram
  \begin{equation*}
    \xymatrix{
        **[l]\positivebmhomology{1}\paramspace\ar@{.>}[r]\ar[d]\pullbackcorner
      & **[r]\colim_{\text{proper }K\subset\paramspace}\prod_{x\in \R}\positivebmhomology{0}K_x\ar[d]
      \\
        **[l]\colim_{\text{proper }K\subset\paramspace}\bmhomology{1}K
        \ar[r]
      & **[r]\colim_{\text{proper }K\subset\paramspace}\prod_{x\in U}\bmhomology{0}K_{x},
    }
  \end{equation*}
  whose vertical arrows are inclusions and whose bottom horizontal arrow is a corestriction of the bottom horizontal arrow in (\ref{eqn:positive-homology}) (for the case $U=\R$), commute and hence define a pullback square, because proper cube complexes inside $\paramspace$ are cofinal among all proper subspaces of $\paramspace$.
  The lemma would then follow because pullbacks commute with filtered colimits in $\CONES$.
  
  Consider $x\in\timedomain$. 
  The image of $\partial_1:\bmhomology{1}(\paramspace_x,L_x)\ra\bmhomology{0}L_x$ is finitely generated by $L_x$ compact and hence the image of relative $1$-homology classes supported on a compact set of the form $F_x\times\{x\}\subset\paramspace_x$, which we can take to 1) contain $L_x$ by $L_x$ compact and 2) be a finite union of isothetic, compact hyperrectangles because such hyperrectangles form a neighborhood basis of each point in $\paramspace_x$.
  There exist a compact neighborhood $I_x$ of $x$ in $\timedomain$ such that $F_x\times I_x\subset\paramspace$ by each point in $\paramspace$ admitting a neighborhood basis of compact hyperrectangles.
  
  Each closed cell $c\subset\timedomain$ admits a finite cover by sets of the form $I_x$ for $x\in c$ by $c$ compact.
  A union $M$ of sets of the form $I_x\times F_x$, over the sets $I_x$ in all such finite covers over all closed cells in $\timedomain$, defines a proper cube complex containing $L$.
  Inside
  \begin{equation}
    \label{eqn:ses}
    \xymatrix@C=5pc{
        \bmhomology{1}(\paramspace_x,L_x)
        \ar[r]^-{\partial_1}
      & \bmhomology{0}L_x
        \ar@{.>}[d]^-{\bmhomology{0}(L_x\subset M_x)}
        \ar[r]^-{(L_x\ira\paramspace_x)_*}
      & \bmhomology{0}\paramspace_x        
      \\
        \bmhomology{1}(M_x,L_x)
        \ar[r]_-{\bmhomology{0}(L_x\ira M_x)\circ\partial_1}
        \ar[u]^{\bmhomology{1}((\paramspace_x,L_x)\subset(\paramspace_x,M_x))}
      & \bmhomology{0}L_x\ar@{.>}[r]_-{\bmhomology{0}(M_x\subset\paramspace_x)}
      & \bmhomology{0}M_x
        \ar[u],
    }
  \end{equation}  
  the left vertical arrow is surjective by our choice of $M_x$ and hence the image of the top left horizontal arrow coincides with the image of the bottom left horizontal arrow, hence the kernel of the top right horizontal arrow coincides with the kernel of the bottom right horizontal arrow.
  Hence the restriction of the rightmost vertical arrow to the image of the bottom right arrow is injective.
\end{proof} 

% - - - - - - - - - - - - -
\subsection{Detecting sections}
% - - - - - - - - - - - - -

Sections to a space $\paramspace$ over $\R^q$ determine positive elements in $\bmhomology{q}\paramspace$.

\begin{lem}
  \label{lem:sections.determine.positivity}
  For a space $\paramspace$ over $\R^q$ admitting a section, $\positivebmhomology{q}\paramspace\neq\varnothing$.
\end{lem}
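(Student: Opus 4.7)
The plan is to induct on $q$. For the base case $q=0$, a section to $\paramspace$ over $\R^0$ is a point $p\in\paramspace$; taking $K=\{p\}$ in Definition~\ref{defn:positive-homology-zero} gives $\globalsections{}(\constantsheaf{K}{\R^+})\cong\R^+$, and the identity map $\R^+\to\R^+$ produces a nonzero element of the colimit defining $\positivebmhomology{0}\paramspace$.

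For the inductive step, fix $q\geqslant 1$ and a section $s\colon\R^q\to\paramspace$. Write $\R^q=\R\times\R^{q-1}$. For each $t\in\R$, the formula $y\mapsto s(t,y)$ defines a section $s_t\colon\R^{q-1}\to\paramspace_t$, so the inductive hypothesis produces positive classes $\alpha_t\in\positivebmhomology{q-1}\paramspace_t$. I would then construct a candidate lift $[s]\in\bmhomology{q}\paramspace$ as follows. Since $\projection_\paramspace\circ s=\id_{\R^q}$, the map $s$ is a proper closed embedding, and its image $s(\R^q)$ is a properly embedded oriented $q$-manifold in $\paramspace$ with orientation pulled back from $\R^q$. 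The fundamental class in $\bmhomology{q}(s(\R^q))\cong\R$ induced by this orientation should push forward along $s$ to a class $[s]\in\bmhomology{q}\paramspace$, which I take as the left-vertex candidate in the pullback square (\ref{eqn:positive-homology}).

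The remaining work is to verify that $([s],(\alpha_t)_t)$ satisfies the pullback condition, i.e., that the bottom composite of (\ref{eqn:positive-homology}) sends $[s]$ to the image of $(\alpha_t)_t$ under the right vertical arrow. Unpacking this, for each $t$ one must show that the restriction of $[s]$ to the open subspace $\paramspace_{(-\infty,t)}$, followed by the connecting map $\partial_q$ coming from (\ref{eqn:homological.les}), recovers the fundamental class of the slice $s_t(\R^{q-1})\subset\paramspace_t$. Geometrically this is the statement that the boundary-at-$t$ of the relative fundamental class of the half-space $s((-\infty,t)\times\R^{q-1})$ is the fundamental class of its slice at $t$, a manifold-with-boundary computation already visible on cube complexes through the identification $\bmhomology{q}\paramspace_{(x,y)}\cong\homology{\sing}{q}(\paramspace_{[x,y]},\paramspace_{\{x,y\}})$ given earlier. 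By the inductive construction applied to $s_t$, this slice class represents $\alpha_t$, and so $([s],(\alpha_t)_t)$ becomes a legitimate pullback element, producing the desired nonempty element of $\positivebmhomology{q}\paramspace$.

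The main obstacle is this last compatibility: identifying the abstract connecting map with the geometric boundary operator on fundamental classes of properly embedded submanifolds, and tracking orientation conventions carefully enough that the boundary at $t$ matches the inductively-produced representative of $\alpha_t$. The base case and the construction of $[s]$ are routine once one uses the covariance of $\bmhomology{\Grading}$ under proper closed inclusions implicit in its sheaf-theoretic definition.
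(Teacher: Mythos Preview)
Your plan is sound, but it takes a more laborious route than the paper. The paper's argument is two lines: a section $s$ is a closed inclusion $B\hookrightarrow\paramspace$ of spaces over $\R^q$, where $B$ is the space with $\projection_B=\id_{\R^q}$; by Example~\ref{eg:spheres} one has $\positivebmhomology{q}B\cong\R^+$, and covariant functoriality of $\positivebmhomology{q}$ under closed inclusions yields a cone map $\R^+\ra\positivebmhomology{q}\paramspace$, so the target is nonempty.

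Your induction effectively unwinds both of those ingredients at once. The inductive structure you set up is precisely the inductive verification of Example~\ref{eg:spheres}, and the compatibility you flag as the ``main obstacle''---that $\partial_q$ carries $[s]$ restricted to $\paramspace_{(-\infty,t)}$ onto the slice class $\alpha_t$---is exactly the naturality of the pullback square (\ref{eqn:positive-homology}) in the closed inclusion $s(\R^q)\hookrightarrow\paramspace$. So you are re-deriving functoriality and the base computation in this one instance rather than invoking them. What this buys you is a self-contained argument that does not lean on the (unproved in the paper) Example~\ref{eg:spheres}; what it costs is the orientation and connecting-map bookkeeping you correctly identify, which the paper's version packages once and then reuses.
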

\begin{proof}
  A section to $\paramspace$ defines a closed inclusion $B\ra\paramspace$ of spaces over $R^q$, with $B$ the space over $\R^q$ defined by $\projection_B=\id_{\R^q}$, and hence induces a cone map $\positivebmhomology{q}B\ra\positivebmhomology{q}\paramspace$ with domain isomorphic to $\R^+$ and hence non-empty.
\end{proof}

A converse holds for homological degree $1$, under some point-set conditions.

\begin{lem}
  \label{lem:positivity.determines.sections}
  An open subspace $\paramspace$ of a vector bundle over $\timedomain$ admits a section if $\positivebmhomology{1}\paramspace\neq\varnothing$.
\end{lem}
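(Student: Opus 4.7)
The plan is to use Lemma \ref{lem:proper.supports} to reduce to a proper cube complex $K$ over $\timedomain$, interpret a positive 1-class as a forward-oriented combinatorial flow on $K$, extract a bi-infinite forward edge path by a walk argument on finite slabs, and pass to the limit by compactness.

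By Lemma \ref{lem:proper.supports}, any non-empty element of $\positivebmhomology{1}\paramspace$ is the image of an element of $\positivebmhomology{1}K$ for some proper cube complex $K\subset\paramspace$ over $\timedomain$. Any section of $\projection_K$ yields a section of $\projection_\paramspace$ via the inclusion $K\hookrightarrow\paramspace$, so it suffices to construct a section $\timedomain\to K$. After subdividing, I may assume each 1-cell of $K$ either projects to a single point of $\timedomain$ or projects homeomorphically onto a compact subinterval. By the example following Definition \ref{defn:positive-homology}, the positive class is represented by a locally finite $\R^+$-weighted sum $\alpha=\sum_e a_e\sigma_e$ of non-decreasing 1-cells with $a_e>0$, satisfying the conservation law at every interior 0-cell, and with $\alpha_t\in\positivebmhomology{0}K_t$ non-zero for every $t$ (since positive cones exclude $0$).

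For each positive integer $n$, subdivide so that $\pm n$ are vertex times. Properness together with local finiteness implies $K_{[-n,n]}$ is a finite subcomplex, on which the restriction of $\alpha$ is a finite positive flow. After subtracting any positive sub-flow supported entirely within a single fiber (which does not alter $\alpha_t$ for any $t$), I construct a forward edge path $\gamma_n:[-n,n]\to K_{[-n,n]}$ lifting the identity on $[-n,n]$: choose $v_0\in K_{-n}$ in the support of $\alpha_{-n}$, and at each interior vertex reached follow a time-increasing 1-cell of positive weight whenever possible, otherwise a time-horizontal 1-cell of positive weight. The conservation law guarantees positive outgoing weight at every interior vertex; the absence of horizontal cycles after the subtraction above forces any maximal horizontal walk inside a fiber to terminate at a vertex with a time-increasing outgoing edge. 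Strict time-monotonicity on non-horizontal steps plus finiteness of the slab then force the walk to terminate in $K_n$.

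After affine reparametrization across each cell the paths $\gamma_n$ are uniformly Lipschitz, and for each fixed $m$ the tails $(\gamma_n|_{[-m,m]})_{n\geqslant m}$ take values in the compact space $K_{[-m,m]}$. Arzel\`a--Ascoli together with a Cantor diagonalization then produces a continuous section $\gamma:\timedomain\to K\subset\paramspace$, as desired. The main obstacle is the flow-walking step: one must verify that after subtracting horizontal cycles the walk is well-defined and cannot stall before reaching $K_n$, which is exactly the content of the conservation plus absence-of-horizontal-cycles argument above; once this K\"onig-style extraction is in hand, the passage to the limit is routine.
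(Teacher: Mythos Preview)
Your approach and the paper's share the reduction to a proper cube complex $K$ via Lemma \ref{lem:proper.supports} but then diverge. The paper proceeds categorically: it gives $\timedomain$ a CW structure making each $K_e$ a product over each open edge $e$, uses the defining pullback square (\ref{eqn:positive-homology}) to obtain cone maps $\positivebmhomology{1}K\to\freepositivecone{\pi_0\sections_K(c)}$ over each closed cell $c$, assembles these into an element of $\lim_c\freepositivecone{\pi_0\sections_K(c)}$, deduces $\lim_c\pi_0\sections_K(c)\neq\varnothing$, and invokes the gluing Lemma \ref{lem:holim} to produce a global section of $\paramspace$. You instead attempt a direct geometric extraction via a combinatorial walk and Arzel\`{a}--Ascoli. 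The paper's route cleanly separates the algebraic step (positive homology $\Rightarrow$ compatible local section classes) from the point-set gluing (Lemma \ref{lem:holim}); yours fuses them.

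Two steps in your proposal need more work. First, the representation of $\alpha$ as an $\R^+$-weighted sum of non-decreasing $1$-cells is not immediate from the pullback definition; all the definition gives is that the slice at each $t$ lies in $\positivebmhomology{0}K_t$, and the example you cite is an informal interpretation, not a proved statement. An arbitrary cellular representative may carry signed coefficients, so your Kirchhoff walk rule (``follow a positive-weight forward edge'') is not well-posed until you argue---e.g.\ by adding boundaries of product $2$-cells in each $K_e$---that the representative can be chosen with nonnegative coefficients on the non-horizontal cells. Second, whenever your walk traverses a horizontal cell the composite $\projection_K\circ\gamma_n$ stalls, so $\gamma_n$ cannot simultaneously lift the identity on $[-n,n]$ and cross that cell inside $K$; the Arzel\`{a}--Ascoli limit is then only a non-decreasing path in $K$, and promoting it to a genuine section of the open $\paramspace$ is precisely the wiggle argument packaged in Lemma \ref{lem:holim}. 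Both gaps are repairable, but once repaired your argument essentially reconstructs the paper's two key steps in different language.
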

\begin{proof}
  We take $\paramspace$ to be an open subspace of the bundle $\R^{n+1}$ over $\R$, whose bundle map is projection onto the last factor, without loss of generality by the triviality of vector bundles over $\timedomain$.  

  Suppose $\positivebmhomology{1}\paramspace\neq\varnothing$.
  There exists a proper cube complex $K\subset\paramspace$ such that $\positivebmhomology{0}K\neq\varnothing$ [Lemma \ref{lem:proper.supports}].
  We can endow $\timedomain$ with the structure of a CW complex such that $K_e$ is the product of a compact hyperrectangle with $e$, for each open edge $e\subset\timedomain$, by $K$ a cube complex.  
  Let $\sigma_t$ be the composite linear map
  \begin{equation*}
    \bmhomology{1}K\xra{\bmhomology{1}(K_{(-\infty,t)}\subset K)}\bmhomology{1}K_{(-\infty,t)}\xra{\partial_1}\bmhomology{0}K_t.
  \end{equation*}
 
  Commutative squares of the following form, where $c$ denotes a closed cell in $\timedomain$ having a point $t_c$ in the associated open cell, define the dotted arrows inside of them because the right vertical arrows are isomorphisms by our choice of CW structre on $\timedomain$.
  \begin{equation*}
    \xymatrix{
        **[l]\positivebmhomology{0}K_{t_c}\ar@{=}[r]
      & **[r]\freepositivecone{\pi_0K_{t_c}}\\
        **[l]\positivebmhomology{1}K\ar[u]^{\sigma_{t_c}}\ar@{.>}[r]
      & **[r]\freepositivecone{\pi_0\sections_{K}(c)}\ar[u]_{\freepositivecone{\pi_0(s\mapsto s(t_c))}},
    }
  \end{equation*}

  These dotted arrows define the components of a cone $\positivebmhomology{1}K\ra\freepositivecone{\pi_0\sections_{K}(-)}$ to the functor $\freepositivecone{\pi_0\sections_{K}(-)}$ from the poset of closed cells in $\timedomain$, reverse ordered by inclusion, to $\CONES$.
  Hence $\lim\freepositivecone{\pi_0\sections_{K}(-)}\neq\varnothing$, hence $\lim\pi_0\sections_{K}(-)\neq\varnothing$ because $\freepositivecone{-}$ is a subfunctor of a limit-preserving functor, and hence $\globalsections{}{\sections_K}\neq\varnothing$ [Lemma \ref{lem:holim}].
\end{proof}

% -----------------------------------------------------------------------------------------------------------------
\section{Cohomology}\label{sec:cohomology}
% -----------------------------------------------------------------------------------------------------------------
For each paracompact space $X$, take $\cohomology{}{\Grading}X$ to mean the sheaf cohomology
\begin{equation*}
  \cohomology{}{\Grading}X=\cohomology{\closed(X)}{\Grading}(\constantsheaf{X}{\R}).
\end{equation*}

\begin{eg}
  The construction $\cohomology{}{\Grading}$ is equivalently real \v{C}ech cohomology.
\end{eg}

The construction $\cohomology{}{\Grading}X$ is contravariant in spaces $X$.
For a paracompact space $X$ and closed $A\subset X$, the long exact sequence (\ref{eqn:cohomological.les}) specializes to the following long exact sequence, in which the unlabelled arrows are induced from inclusions and restrictions.
\begin{equation}
     \cdots\ra
     \cohomology{}{\Grading}X
      \ra
    \cohomology{}{\Grading}(X\setmin A)
      \xra{\partial^\Grading}
    \cohomology{}{\Grading+1}A
      \ra\cdots,
\end{equation}

For each cofiltered system $X_i$ of paracompact spaces and embeddings between them,
\begin{equation*}
\colim_i\cohomology{}{n-1}X_i=\cohomology{}{n-1}\lim\;\!_iX_i
\end{equation*}
where the limit above is taken in the category of spaces and maps between them \cite[Theorem 10.6]{bredon2012sheaf}.

Positive cones on the cohomologies of oriented manifolds-with-compact-boundary encode information about those orientations as follows.

\begin{defn}
  \label{defn:positive-cohomology-man}
  Define $\positivecohomology{n-1}\manifold$ by the pullback square
  \begin{equation*}
    \xymatrix@C=4pc{
        **[l]\positivecohomology{n-1}\manifold
          \ar[r]\ar[d]\pullbackcorner
      & **[r]\positivebmhomology{0}\partial\manifold\ar[d]
      \\
        **[l]\cohomology{}{n-1}\manifold\ar[r]_{\Delta\circ\cohomology{}{n-1}(\partial\manifold\subset\manifold)}
      & **[r]\bmhomology{0}\partial\manifold,
    }
  \end{equation*}
  where the right vertical arrow is inclusion, natural in non-compact, connected, oriented $n$-manifolds-with-compact-boundary.
\end{defn}

It is straightforward to check that $\positivecohomology{n-1}$ is contravariant in morphisms in $\MANIFOLDS_{n}$.
Moreover, $\positivecohomology{n-1}$ sends cofiltered limits in $\MANIFOLDS_{n}$ to colimits in $\CONES$.
Thus the following extension of $\positivecohomology{n-1}$ is well-defined.

\begin{defn}
  Define $\positivecohomology{n-1}\manifold$ as the following colimit natural in pro-objects $\manifold$ in $\MANIFOLDS_{n}$.
  \begin{equation*}
    \positivecohomology{n-1}\manifold=\colim_i\positivecohomology{n-1}\manifold_i.
  \end{equation*}
\end{defn}

We thus extend the definition of positive cohomology one final time as follows.

\begin{defn}
  \label{defn:positive-cohomology-param}
  Define $\positivecohomology{n-1}\paramspace$ by the pullback square
  \begin{equation*}
    \xymatrix@C=12pc{
        **[l]\positivecohomology{n-1}\paramspace\ar[r]\ar[d]\pullbackcorner
      & **[r]\prod_{x\in T}\positivecohomology{n-1}\paramspace_x\ar[d]
      \\
        **[l]\cohomology{}{n-1}\paramspace\ar[r]_{\prod_{x\in T}\cohomology{}{n-1}(\paramspace_x\subset\paramspace)}
      & **[r]\prod_{x\in T}\cohomology{}{n-1}\paramspace_x,
    }
  \end{equation*}
  where the right vertical arrow is inclusion, natural in spaces $\paramspace$ over $T$ whose fibers have the extra structure of decompositions, in the category of spaces, as limits of pro-objects in $\MANIFOLDS_n$.
\end{defn}

The cone $\positivecohomology{n-1}\paramspace$ inside $\cohomology{}{n-1}\lim_i\paramspace_i$ is always positive.
Hence we call $\positivecohomology{n-1}\paramspace$ the \textit{positive cohomology} of $\paramspace$.

% ###################################################
\begin{figure}[h]
%  \begin{tabular}{cc}
%       \includegraphics[width=40mm,height=30mm]{HnX.png} 
%     & \includegraphics[width=60mm,height=40mm]{H(n-1)P.png} 
%  \end{tabular}
    \includegraphics[width=5.5in]{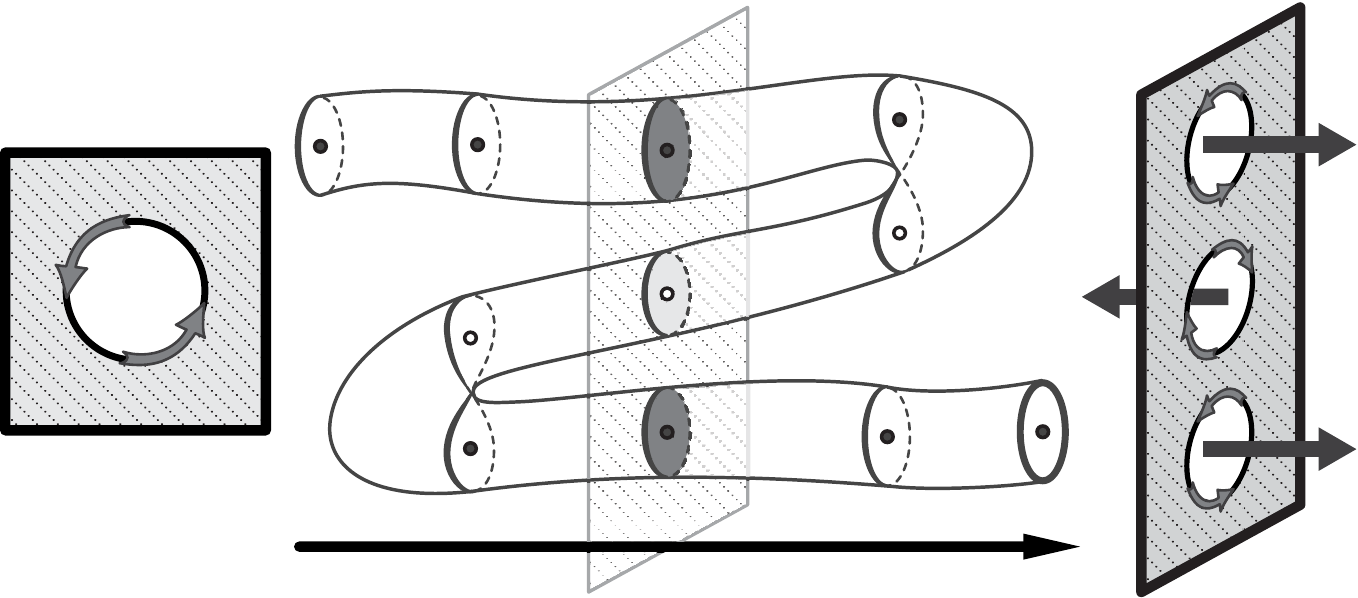}
  \caption{{\bf Cohomological counterparts to $\positivebmhomology{0},\positivebmhomology{1}$}: In the manifold-with-boundary $\manifold$, left, a compactly supported $1$-cohomology class is positive if and only if its restriction to the boundary is a positive multiple of the orientation class of that boundary. In the space $\paramspace$ over $\timedomain$ shown [middle], a $1$-cohomology class is positive if its restriction to each fiber is positive [right].}
  \label{fig:cohomologyorientation}
\end{figure}
% ###################################################

% -----------------------------------------------------------------------------------------------------------------
\section{Alexander Duality}\label{sec:duality}
% -----------------------------------------------------------------------------------------------------------------
\textit{Alexander Duality} $\alexanderduality$ is an isomorphism defined by the commutative diagrams
\begin{equation*}
  \xymatrix@C=7pc{
      **[l]\cohomology{}{\dim\ambientspace-q-1}\sensedspace
      \ar@{.>}[r]^{\alexanderduality}
      \ar[d]_{\partial^{n-1}}
    & **[r]\bmhomology{q}(\ambientspace\setmin\sensedspace)
    \\
      **[l]\cohomology{}{\dim\ambientspace-q}(\ambientspace\setmin\sensedspace)
      \ar[r]_{\Delta}
    & **[r]\bmhomology{q}(\ambientspace\setmin\sensedspace)
      \ar@{=}[u]
  }
\end{equation*}
of isomorphisms, for $q=0,1,2,\ldots,\dim\ambientspace-2$, natural in coverage pairs $(\ambientspace,\sensedspace)$.
Alexander Duality restricts and corestricts to an isomorphism of positive cones.

\begin{thm}
  \label{thm:positive.alexander.duality}
  For coverage pair $(\ambientspace,\sensedspace)$ over $\R^q$ with $\dim E>2$ and $0\leqslant q\leqslant\dim\ambientspace-2$,
  \begin{equation*}
    \positivecohomology{\dim\ambientspace-q-1}\sensedspace\cong\positivebmhomology{q}(\ambientspace\setmin\sensedspace)
  \end{equation*}
  along a restriction and corestriction of Alexander Duality.
\end{thm}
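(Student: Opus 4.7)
The plan is to show that the classical Alexander Duality $\alexanderduality\colon\cohomology{}{n-q-1}\sensedspace\to\bmhomology{q}(\ambientspace\setmin\sensedspace)$ (abbreviating $n=\dim\ambientspace$) restricts and corestricts to a bijection between the positive cones. Since both cones are defined as pullbacks inside isomorphic vector spaces, it suffices to exhibit $\alexanderduality$ as a morphism of pullback data, and I proceed by induction on $q$. For the base case $q=0$, the base $T$ is a point and $\sensedspace$ is a pro-object in $\MANIFOLDS_n$ represented by closed manifold neighborhoods $M_\alpha$, so $\positivecohomology{n-1}\sensedspace=\colim_\alpha\positivecohomology{n-1}M_\alpha$ by Definition \ref{defn:positive-cohomology-man} extended to pro-objects. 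The factorization $\alexanderduality=\Delta\circ\partial^{n-1}$ agrees, on each $M_\alpha$, with the composite of restriction $\cohomology{}{n-1}M_\alpha\to\cohomology{}{n-1}\partial M_\alpha$, Poincar\'{e} duality $\Delta$ on $\partial M_\alpha$, and push-forward $\bmhomology{0}\partial M_\alpha\to\bmhomology{0}(\ambientspace\setmin\sensedspace)$ along inclusion. This composite carries a positive class (positive $0$-cycle on $\partial M_\alpha$) to a positive combination of path components of $\ambientspace\setmin\sensedspace$; since sufficiently small $M_\alpha$ produce boundary components in every path component of $\ambientspace\setmin\sensedspace$, the colimit of these cones exhausts $\positivebmhomology{0}(\ambientspace\setmin\sensedspace)$, and bijectivity follows because $\alexanderduality$ itself is an isomorphism.

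For the inductive step $q\geq 1$, iterating Definition \ref{defn:positive-cohomology-param} across all but one coordinate of $T=\R^q$ yields the alternative characterization
\begin{equation*}
\positivecohomology{n-q-1}\sensedspace=\{\alpha\in\cohomology{}{n-q-1}\sensedspace:\alpha|_{\sensedspace_t}\in\positivecohomology{n-q-1}\sensedspace_t\text{ for every }t\in\R\},
\end{equation*}
where $\sensedspace_t$ denotes the slice regarded as a parametrized space over $\R^{q-1}$. Here $(\ambientspace_t,\sensedspace_t)$ is a coverage pair of ambient dimension $n-1$ over $\R^{q-1}$, so the inductive hypothesis supplies, for each $t\in\R$, an isomorphism $\alexanderduality_t\colon\positivecohomology{n-q-1}\sensedspace_t\cong\positivebmhomology{q-1}(\ambientspace\setmin\sensedspace)_t$. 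Comparing with Definition \ref{defn:positive-homology}, the theorem reduces to the commutativity, for each $t\in\R$, of the square
\begin{equation*}
\xymatrix@C=4pc{
    \cohomology{}{n-q-1}\sensedspace\ar[r]^{\alexanderduality}\ar[d]_{\mathrm{res}}
  & \bmhomology{q}(\ambientspace\setmin\sensedspace)\ar[d]^{\partial_q\,\circ\,\mathrm{res}}\\
    \cohomology{}{n-q-1}\sensedspace_t\ar[r]_{\alexanderduality_t}
  & \bmhomology{q-1}(\ambientspace\setmin\sensedspace)_t,
}
\end{equation*}
where the left vertical is restriction to the slice $\sensedspace_t\subset\sensedspace$ and the right vertical composes contravariant restriction to $(\ambientspace\setmin\sensedspace)_{(-\infty,t)}$ with the connecting $\partial_q$ of (\ref{eqn:homological.les}).

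The main obstacle is verifying this compatibility square. My plan is to factor $\alexanderduality=\Delta\circ\partial^{n-q-1}$ using (\ref{eqn:cohomological.les}) for $(\ambientspace,\sensedspace)$, and similarly factor $\alexanderduality_t$ using $(\ambientspace_t,\sensedspace_t)$, then invoke (i) naturality of the long exact sequences (\ref{eqn:cohomological.les}) and (\ref{eqn:homological.les}) under the inclusion of pairs $(\ambientspace_t,\sensedspace_t)\hookrightarrow(\ambientspace_{(-\infty,t]},\sensedspace_{(-\infty,t]})$, (ii) naturality of Poincar\'{e} duality $\Delta$ under open inclusions, and (iii) the standard identification of the homological connecting $\partial_q$ arising from the decomposition $\ambientspace\setmin\sensedspace=(\ambientspace\setmin\sensedspace)_{(-\infty,t)}\sqcup(\ambientspace\setmin\sensedspace)_{[t,\infty)}$ with the image under $\Delta$ of the cohomological coboundary on the ambient side. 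Though standard in flavor, this diagram chase through two long exact sequences, mediated by Poincar\'{e} duality, is the technical heart of the proof.
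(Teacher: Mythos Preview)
Your proposal is correct and follows essentially the same strategy as the paper: induction on $q$, with the base case handled via manifold neighborhoods and Poincar\'{e} duality on their boundaries, and the inductive step reduced---via the pullback characterizations of both positive cones---to a compatibility square relating Alexander duality on the total space to Alexander duality on the slices $\sensedspace_t$. The paper is equally terse about that final diagram chase, dispatching it as commuting ``by a diagram chase and suitably compatible choices of orientations on $\ambientspace$ and its fibers.''
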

\begin{proof}
  We induct on $q$.
  
  Consider the base case $q=0$.
  It suffices to consider the subcase $q=$ and $\sensedspace$ is an oriented $\dim\ambientspace_t$-submanifold-with-compact-boundary.
  The more general case $q=0$ follows from taking the colimit, over neighborhoods $\manifold$ of $\sensedspace$ in $\ambientspace$ that fall under the aforemntioned special case, of positive Alexander Dualities of the form $\positivecohomology{\dim\ambientspace-1}\manifold\cong\positivebmhomology{0}(\ambientspace\setmin\manifold)$ by definition of $\positivecohomology{\dim\ambientspace-1}\sensedspace$ and $\bmhomology{0}$ compactly supported.
  
  Consider the following commutative diagram of solid arrows, in which the top left square is a pullback square and $\iota_X$ denotes inclusion $\positivebmhomology{0}X\ira\bmhomology{0}X$.
  \begin{equation}
    \label{eqn:2-duality}
    \xymatrix@C=4pc{
          \positivecohomology{\dim\ambientspace-1}\sensedspace
          \ar[r]^{\alpha^+}
          \ar[d]
        & \positivebmhomology{0}\partial\sensedspace
          \ar@{.>}[r]^{\beta^+}
          \ar[d]_{\Delta^{-1}\circ\iota_{\partial\sensedspace}}
        & \positivebmhomology{0}(\ambientspace\setmin\sensedspace)
          \ar[d]_{\iota_{\ambientspace\setmin\sensedspace}}
          \\
          \cohomology{}{\dim\ambientspace-1}\sensedspace
          \ar[r]_-{\cohomology{}{\dim\ambientspace-1}(\partial\sensedspace\subset\sensedspace)}
          \ar[d]_{\partial^{\dim\ambientspace-1}}
        & \cohomology{}{\dim\ambientspace-1}\partial\sensedspace
          \ar[d]^{\partial^{\dim\ambientspace-1}}
          \ar[r]^{\beta}
        & \bmhomology{0}(\ambientspace\setmin\sensedspace)
          \ar@{=}[d]
          \\
          \cohomology{}{\dim\ambientspace}(\ambientspace\setmin\sensedspace)
          \ar@{=}[r]
        & \cohomology{}{\dim\ambientspace}(\ambientspace\setmin\sensedspace)
          \ar[r]_-{\Delta}
        & \bmhomology{0}(\ambientspace\setmin\sensedspace),        
    }
  \end{equation}
  
  The composite of the middle row is Alexander Duality by definition.
  Hence $\cohomology{}{n-1}(\partial\sensedspace\subset\sensedspace)$ is injective.
  Moreover, the sequence
  \begin{equation*}
    \cohomology{}{n-1}\sensedspace\xra{\cohomology{}{n-1}(\partial\sensedspace\subset\sensedspace)}\cohomology{}{n-1}\partial\sensedspace\xra{\partial^{n-1}}\cohomology{\closed(\sensedspace){|\sensedspace\setmin\partial\sensedspace}}{n}\constantsheaf{\sensedspace\setmin\partial\sensedspace}{\R}
  \end{equation*}
  is exact and its last term is trivial by the equalities
  \begin{equation*}
    \cohomology{\closed(C){|\sensedspace\setmin\partial\sensedspace}}{n}\constantsheaf{\sensedspace\setmin\partial\sensedspace}{\R}=\homology{\closed(C){|\sensedspace\setmin\partial\sensedspace}}{0}\constantsheaf{\sensedspace\setmin\partial\sensedspace}{\R}=0,
  \end{equation*}
  the first from Poincar\'{e} Duality and the second from $\sensedspace-\partial\sensedspace$ a filtered union of elements in $\closed(\sensedspace)|(\sensedspace\setmin\partial\sensedspace)$ on which $\bmhomology{0}$ is trivial \cite[Exercise 26]{bredon2012sheaf}.
  Then $\cohomology{}{n-1}(\partial\sensedspace\subset\sensedspace)$, and hence also $\alpha^+$, are isomorphisms.
  
  The arrow $\beta$, whose composite with the isomorphism $\cohomology{}{n-1}(\partial\sensedspace\subset\sensedspace)$ is an isomorphism $\alexanderduality:\cohomology{}{n-1}\sensedspace\ra\bmhomology{0}(\ambientspace\setmin\sensedspace)$, is an isomorphism.
  Moreover, $\beta$ sends an orientation class on a connected component represented by $x$ to the path-component in $\ambientspace\setmin\sensedspace$ whose closure in $\ambientspace$ contains $x$.
  Hence $\beta\Delta^{-1}\iota_\sensedspace$ restricts and corestricts to the bijection
  \begin{equation*}
    \pi_0\partial\sensedspace\cong\pi_0(\ambientspace\setmin\sensedspace),
  \end{equation*}
  and hence the isomorphism $\beta^+$ in (\ref{eqn:2-duality}).
  Hence $\beta^+\alpha^+$, a restriction and corestriction of $\alexanderduality$, is an isomorphism.

  Fix positive integer $Q$.
  Inductively assume the theorem holds for the case $q=Q$.
  Now consider the case $q=Q+1$.

  Consider the following commutative diagram
  \begin{equation}
    \label{eqn:1-duality}
    \xymatrix@C=5pc{
          \positivecohomology{n-1}\sensedspace
          \ar[r]\ar[d]\ar@{.>}[ddr]
        & \prod_{x}\positivecohomology{n-1}\sensedspace_x
          \ar@{.>}[ddr]^{\cong}
          \ar[d]
          \\
          \cohomology{}{n-1}\sensedspace
          \ar[ddr]_-{\alexanderduality}
          \ar[r]_-{\prod_x\cohomology{}{n-1}(C_x\subset C)}
        & \prod_{x}\cohomology{}{n-1}\sensedspace_x
          \ar[ddr]
          \\
        & \positivebmhomology{q}(\ambientspace\setmin\sensedspace)\ar[r]\ar[d]
        & \prod_{x}\positivebmhomology{q-1}(\ambientspace_x-\sensedspace_x)
          \ar[d]
          \\
        & \bmhomology{q}(\ambientspace\setmin\sensedspace)\ar[r]
        & \prod_{x}\bmhomology{q-1}(\ambientspace_x-\sensedspace_x)
    }
  \end{equation}
  of solid arrows, where the front and back squares are pullback squares, the vertical arrows are inclusions, and the right arrow in the bottom square is defined by Alexander Dualities.
  There exists a dotted isomorphism making the side square commute by the inductive assumption.
  The bottom square in (\ref{eqn:1-duality}) commutes by a diagram chase and suitably compatible choices of orientations on $\ambientspace$ and its fibers.
  Hence there exists the desired dotted isomorphism in (\ref{eqn:1-duality}) by universal properties of pullbacks.
\end{proof}

The tools are now assembled to infer the complete criterion for evasion. 
The main theorem of this paper states that the positive cohomology $\positivecohomology{n-1}\sensedspace$ is the complete obstruction to evader capture -- that an evasion path exists if and only if this cohomology is non-empty.  

% ******************************************* MAIN THEOREM ***************************
\begin{cor}
  \label{cor:criterion}
  For a coverage pair $(\ambientspace,\sensedspace)$ over $\R$ with $\dim E>2$,
  \begin{equation}
    \label{eqn:criterion}
    \positivecohomology{\dim\ambientspace-2}\sensedspace\neq\varnothing
  \end{equation}
  if and only if the restriction $\ambientspace\setmin\sensedspace\ra\timedomain$ of the projection $\ambientspace\ra\timedomain$ admits a section.
\end{cor}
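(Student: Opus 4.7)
The plan is to assemble the three main results of the paper \textemdash\ Positive Alexander Duality, and the two lemmas relating positive $1$-homology to sections \textemdash\ into a single chain of equivalences. The role of $\positivecohomology{\dim\ambientspace-2}\sensedspace$ is to serve as a computable cohomological proxy for a homological obstruction on $\ambientspace\setmin\sensedspace$, which in turn is engineered to detect sections.

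First I would apply Theorem \ref{thm:positive.alexander.duality} in the special case $q=1$, using the identification $\R=\R^{1}$, to obtain an isomorphism
\begin{equation*}
  \positivecohomology{\dim\ambientspace-2}\sensedspace\cong\positivebmhomology{1}(\ambientspace\setmin\sensedspace)
\end{equation*}
of cones. This uses only the hypothesis $\dim\ambientspace>2$, which is exactly the standing assumption of the corollary, together with $q=1\leqslant\dim\ambientspace-2$. In particular, non-emptiness of one cone is equivalent to non-emptiness of the other.

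Next I would verify that $\ambientspace\setmin\sensedspace$ satisfies the point-set conditions of Lemmas \ref{lem:sections.determine.positivity} and \ref{lem:positivity.determines.sections}. By Definition \ref{defn:coverage.pair}, $\ambientspace$ is a real vector bundle over $\timedomain$ and $\sensedspace$ is a closed subspace, so $\ambientspace\setmin\sensedspace$ is an open subspace of a vector bundle over $\timedomain$ \textemdash\ exactly the setting of Lemma \ref{lem:positivity.determines.sections}. The forward direction (section implies $\positivebmhomology{1}\neq\varnothing$) is then immediate from Lemma \ref{lem:sections.determine.positivity} applied to $\paramspace=\ambientspace\setmin\sensedspace$ with $q=1$. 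The converse is exactly the content of Lemma \ref{lem:positivity.determines.sections}.

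The only subtlety I anticipate is bookkeeping: confirming that ``section to $\ambientspace\setmin\sensedspace\ra\timedomain$'' in the statement of the corollary is the same notion of section over $\R^{q}=\R^{1}$ used in the two lemmas, and that the non-emptiness of $\positivecohomology{\dim\ambientspace-2}\sensedspace$ corresponds under Positive Alexander Duality to the non-emptiness of the positive homology cone (rather than, say, to the existence of a non-zero class in some ambient vector space). Both are routine: Positive Alexander Duality is an isomorphism in $\CONES$, sections over $\timedomain$ and over $\R^{1}$ agree tautologically, and the coverage-pair hypotheses provide the properness and paracompactness needed implicitly by the lemmas. No further argument is required; the corollary is the two-step composite
\begin{equation*}
  \positivecohomology{\dim\ambientspace-2}\sensedspace\neq\varnothing
  \;\Longleftrightarrow\;
  \positivebmhomology{1}(\ambientspace\setmin\sensedspace)\neq\varnothing
  \;\Longleftrightarrow\;
  \ambientspace\setmin\sensedspace\ra\timedomain\text{ admits a section}.\qedhere
\end{equation*}
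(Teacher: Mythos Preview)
Your proposal is correct and follows essentially the same approach as the paper: apply Theorem~\ref{thm:positive.alexander.duality} with $q=1$ to pass from $\positivecohomology{\dim\ambientspace-2}\sensedspace$ to $\positivebmhomology{1}(\ambientspace\setmin\sensedspace)$, then invoke Lemmas~\ref{lem:sections.determine.positivity} and~\ref{lem:positivity.determines.sections} for the equivalence with the existence of a section. Your added verification that $\ambientspace\setmin\sensedspace$ is an open subspace of a vector bundle over $\timedomain$ (needed for Lemma~\ref{lem:positivity.determines.sections}) is a useful point the paper leaves implicit.
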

% ******************************************* MAIN THEOREM ***************************
\begin{proof}
  Observe that
  \begin{align*}
      \positivecohomology{\dim\ambientspace-2}\sensedspace\neq\varnothing
    & \iff\positivebmhomology{1}(\ambientspace\setmin\sensedspace)\neq\varnothing & \mathrm{Theorem\; \ref{thm:positive.alexander.duality}}\\
    & \iff\ambientspace\setmin\sensedspace\;\text{admits a section} & \mathrm{Lemmas\;\ref{lem:positivity.determines.sections},\ref{lem:sections.determine.positivity}}
  \end{align*}
\end{proof}

% -----------------------------------------------------------------------------------------------------------------
\section{Computation}
\label{sec:comput}
% -----------------------------------------------------------------------------------------------------------------
We decompose the requisite calculation of positive cohomology in the criterion as a limit of local positive cohomology cones.
Such a limit is naturally described as the global sections of a \textit{cellular sheaf}.
A \textit{cellular sheaf of cones on $\timedomain$} is a functor from the poset of cells in a stratification of $\timedomain$, into vertices and edges, to $\CONES$.
We notate the global sections of a cellular sheaf $\sheaf{3}$ of cones on $\timedomain$ as $\globalsections{}{\sheaf{3}}$; formally
%For purposes of computability, we restrict our attention in this section to \textit{cellular sheaves} over $\timedomain$ (regarded as stratified into vertices and edges), sheaves over $\timedomain$ that are constant on interiors of edges and whose restriction maps between local sections over connected open neighborhoods of vertices are isomorphisms.
%A cellular sheaf $\sheaf{3}$ on a stratified $\timedomain$ is determined by its values on open stars of cells and restrictions between local sections over such open stars; accordingly, in this section we follow standard notation and write $\sheaf{3}(c)$ for the value that a cellular sheaf $\sheaf{3}$ takes on the open star $\openstar(c)$ of a cell $c$ and $\sheaf{3}(v\leqslant e)$ for the appropriate restriction map $\sheaf{3}(v)\ra\sheaf{3}(e)$, for vertex $v$ and edge $e$ incident to $v$. The global sections of a cellular sheaf $\sheaf{3}$, taking values in a complete category, on a stratified $\timedomain$ are computable as the limit
\begin{equation*}
  \globalsections{}{\sheaf{3}}=\lim_c\sheaf{3}(c),
\end{equation*}
where the limit, indexed over all vertices and edges of $\timedomain$, is taken in $\CONES$.
Cellular sheaves are well-suited to applications involving computation, such as the cellular complexes used to compute ordinary homology; see, {\it e.g.}, \cite{CurrySheaves14,GhristElementary14,ShepardCellular85}. 

\begin{eg}
  For a fiberwise oriented space $\paramspace$ over $\timedomain$ equipped with a stratification,
  \begin{equation*}
    \positivecohomology{n}\paramspace_{\openstar(-)}
  \end{equation*}
  defines a cellular sheaf of cones on $\timedomain$, sending each cell $c$ in $\timedomain$ to the positive cohomology $\positivecohomology{n}\paramspace_{\openstar(c)}$ of the fiberwise oriented space $\paramspace_{\openstar(c)}$ over the open star of $c$ in $\timedomain$ and sending each inclusion $v\leqslant e$ of vertex into edge to an appropriate restriction map of cohomologies.
\end{eg}

%We consider cellular counterparts to all sheaf-theoretic constructions in the previous sections.
%For example, we call the (positive) \textit{$n$-cohomology cellular sheaf} of a space $\paramspace$ over $\timedomain$ equipped with a stratification the cellular sheaf assigning to each cell $c$ in $\timedomain$ the (positive) real $n$-cohomology of $\paramspace_{\openstar(c)}$. 
%We similarly define (positive) Borel-Moore real $1$-homology cellular sheaves.
%Lemmas \ref{le} and \ref{lem:positive-exactness} continue to hold when we replace Borel-Moore (positive) real $1$-homology sheaves with their cellular analogues.

The criterion for evasion decomposes as the global sections of a cellular sheaf of local positive cohomologies.

\begin{cor}
  \label{cor:local}
  For coverage pair $(\ambientspace,\sensedspace)$ over $\timedomain$ and collection $\mathscr{O}$ of open subsets of $\timedomain$,
  \begin{equation*}
    \positivecohomology{\dim\ambientspace-2}\sensedspace_{\bigcup\mathscr{O}}=\lim_{U\in\mathscr{O}}\positivecohomology{\dim\ambientspace-2}\sensedspace_U.
  \end{equation*}
\end{cor}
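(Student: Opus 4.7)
The plan is to transport the sheaf-like property of positive $1$-homology recorded in Lemma \ref{lem:positive-exactness} across Positive Alexander Duality. Pulling back a coverage pair along an open inclusion $U \hookrightarrow \timedomain$ preserves the hypotheses needed to invoke Theorem \ref{thm:positive.alexander.duality} with $q = 1$, so for each $U \in \mathscr{O}$ (and for $\bigcup\mathscr{O}$) one has an isomorphism
$$\positivecohomology{\dim\ambientspace - 2}\sensedspace_U \;\cong\; \positivebmhomology{1}(\ambientspace_U \setmin \sensedspace_U).$$
The first task is to promote these pointwise isomorphisms to a natural isomorphism of diagrams indexed by the poset $\mathscr{O}$.

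Once naturality is in hand, I would apply Lemma \ref{lem:positive-exactness} directly to the open subspace $\ambientspace_{\bigcup\mathscr{O}} \setmin \sensedspace_{\bigcup\mathscr{O}}$ of the vector bundle $\ambientspace$ over $\timedomain$. Because taking the complement commutes with pullback of spaces over $\timedomain$, the lemma yields the sheaf identity
$$\positivebmhomology{1}(\ambientspace_{\bigcup\mathscr{O}} \setmin \sensedspace_{\bigcup\mathscr{O}}) \;=\; \lim_{U \in \mathscr{O}} \positivebmhomology{1}(\ambientspace_U \setmin \sensedspace_U).$$
Composing with the natural isomorphisms of the previous paragraph and using that isomorphic diagrams have isomorphic limits in $\CONES$ gives the claimed equality.

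The main obstacle is the naturality step: verifying that the pullback-square construction of $\positivecohomology{n-1}$ from Definition \ref{defn:positive-cohomology-param}, the pullback-square construction of $\positivebmhomology{1}$ from Definition \ref{defn:positive-homology}, and the classical Alexander Duality $\alexanderduality$ on ordinary (co)homology all respect restriction to open sub-bases. The long exact sequences (\ref{eqn:cohomological.les}) and (\ref{eqn:homological.les}) used to build $\alexanderduality$ are natural in open inclusions, so the ordinary duality square in the proof of Theorem \ref{thm:positive.alexander.duality} extends to a commutative cube over the poset $\mathscr{O}$. The vertical inclusions $\iota$ carving out positive cones in diagram (\ref{eqn:2-duality}) are automatically natural, so by the universal property of pullbacks the positive duality isomorphisms assemble into a natural isomorphism of $\mathscr{O}$-indexed diagrams in $\CONES$, completing the reduction to Lemma \ref{lem:positive-exactness}.
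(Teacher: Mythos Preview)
Your proposal is correct and follows the same route as the paper: apply Theorem \ref{thm:positive.alexander.duality} at $q=1$ on each open set, invoke Lemma \ref{lem:positive-exactness} for the sheaf property of $\positivebmhomology{1}$, and transport back via duality. The paper simply asserts the isomorphisms are natural, whereas you spell out why; otherwise the arguments coincide.
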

\begin{proof}
  There exist natural isomorphisms
  \begin{align*}
      \positivecohomology{n-1}\sensedspace
    & \cong\positivebmhomology{n-1}(\ambientspace\setmin\sensedspace) & \mathrm{Theorem\; \ref{thm:positive.alexander.duality}}\\
    & \cong\lim_U\positivebmhomology{1}(\ambientspace_U\setmin\sensedspace_U) & \mathrm{Lemma\;\ref{lem:positive-exactness}}\\
    & \cong\lim_U\positivecohomology{n-1}\sensedspace_U, & \mathrm{Theorem\; \ref{thm:positive.alexander.duality}}.
  \end{align*}
\end{proof}

Formulated in the language of sheaves,
\begin{equation*}
  \positivecohomology{n-1}\sensedspace=\globalsections{}{\positivecohomology{n-1}\sensedspace_{\openstar(-)}}.
\end{equation*}

Corollary \ref{cor:local} is the starting point for local-to-global calculations of the positive cohomological criterion. 
This allows us to break down $\positivecohomology{n-1}\sensedspace$ into three calculations.
\begin{enumerate}
  \item the cohomology cellular sheaf $\cohomology{}{n-1}\sensedspace_{\openstar(-)}$
  \item the positive cohomology cellular subsheaf $\positivecohomology{n-1}\sensedspace_{\openstar(-)}$
  \item the global sections $\globalsections{}{\positivecohomology{n-1}\sensedspace_{\openstar(-)}}$
\end{enumerate}

The first calculation is purely classical.
The second calculation involves checking which local sections in $\cohomologysheaf{n-1}(U)$ restrict to orientation classes on connected components of $\partial\sensedspace_t$ for all open stars $U$ in a stratification of $\timedomain$ and all $t\in U$ with $\sensedspace_t$ a connected manifold-with-compact-boundary.
The third calculation is nontrivial. 
In this section, we show how to reduce it to a simple linear program.

\begin{prop}
  \label{prop:sheafequalizer}
  For a cellular sheaf $\sheaf{3}$ of cones on $\timedomain$ equipped with a stratification,
%  Consider the following data.
%  \begin{enumerate}
%    \item a stratification of $\timedomain$ into vertices and edges; and
%    \item a cellular sheaf $\sheaf{3}$ on $\timedomain$ of cancellative commutative semigroups and semigroup homomorphisms between them (such as a cellular analogue of a positive cohomology sheaf).
%  \end{enumerate}
%  Then the global sections of $\sheaf{3}$ are isomorphic to the intersection
  \begin{equation}
    \label{eq:obstructioncomputation}
    \globalsections{}{\sheaf{3}}=
    \prod_{v}\sheaf{3}(v)
    \cap
    \ker\left(
        \prod_{v}\sheaf{3}(v)^{\pm}
        \xra{(\phi_v)_v\mapsto(\sheaf{3}^\pm(e_-\leqslant e)(\phi_{e_-})-\sheaf{3}^\pm(e_+\leqslant e)(\phi_{e_+}))_e}
        \prod_{e}\sheaf{3}(e)^{\pm}
            \right)
            ,
  \end{equation}  
  where $v$ ranges over all vertices and $e$ ranges over all precompact edges in $\timedomain$, and $e_-,e_+$ denote the vertices of a precompact edge in $\timedomain$ having boundaries $e_-<e_+$.
\end{prop}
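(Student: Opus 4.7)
My plan is to obtain the formula by directly unwinding the definition $\globalsections{}{\sheaf{3}}=\lim_{c}\sheaf{3}(c)$ as a limit in $\CONES$, using the standard construction of limits via products and equalizers.

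First, I would use that limits over a small diagram in $\CONES$ can be computed as the intersection of the corresponding limit in real vector spaces with the cartesian product of the constituent cones. More concretely, an element of $\lim_{c}\sheaf{3}(c)$ is a tuple $(\phi_c)_c$ with $\phi_c\in\sheaf{3}(c)$ such that for every face relation $v\leqslant e$ (vertex $v$ bounding an edge $e$), the sheaf map $\sheaf{3}(v\leqslant e):\sheaf{3}(v)\to\sheaf{3}(e)$ satisfies $\sheaf{3}(v\leqslant e)(\phi_v)=\phi_e$.

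Next, I would eliminate the edge coordinates $\phi_e$ from the description. For every edge $e$ the value $\phi_e$ is forced by $\phi_e=\sheaf{3}(e_-\leqslant e)(\phi_{e_-})$, so the tuple $(\phi_v)_v$ of vertex values determines the full section, provided the two forced values agree at each edge. For a non-precompact edge (a ray at $\pm\infty$ in the stratification of $\timedomain$) there is only one bounding vertex, so the edge value is merely defined and imposes no constraint; these can be ignored. For a precompact edge $e$ with boundary vertices $e_-<e_+$, the single remaining consistency condition is
\begin{equation*}
\sheaf{3}(e_-\leqslant e)(\phi_{e_-})=\sheaf{3}(e_+\leqslant e)(\phi_{e_+}).
\end{equation*}
Since both sides already lie in the cone $\sheaf{3}(e)\subset\sheaf{3}(e)^{\pm}$, the equality in $\sheaf{3}(e)$ is equivalent to the equality in the ambient vector space $\sheaf{3}(e)^{\pm}$; and the latter is in turn equivalent to $\sheaf{3}^{\pm}(e_-\leqslant e)(\phi_{e_-})-\sheaf{3}^{\pm}(e_+\leqslant e)(\phi_{e_+})=0$ because $\sheaf{3}^{\pm}(v\leqslant e)$ is by definition the unique linear extension of the cone map $\sheaf{3}(v\leqslant e)$.

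Assembling these observations, a vertex tuple $(\phi_v)_v\in\prod_v\sheaf{3}(v)^{\pm}$ extends (uniquely) to a global section of $\sheaf{3}$ precisely when (i) each $\phi_v$ lies in the subcone $\sheaf{3}(v)$ (which is captured by intersecting with $\prod_v\sheaf{3}(v)$), and (ii) the joint vector $(\phi_v)_v$ lies in the kernel of the stated linear map into $\prod_e\sheaf{3}(e)^{\pm}$. This gives exactly the right-hand side of~(\ref{eq:obstructioncomputation}). I expect no serious obstacle beyond the bookkeeping of verifying that the limit in $\CONES$ really does decompose this way; the only subtle point is noting that cone equality and linear-span equality coincide (so the kernel formulation loses no positivity information) and that non-precompact edges genuinely contribute no constraint, which is why the product on the right ranges only over precompact edges.
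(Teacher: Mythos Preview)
Your argument is correct and matches the paper's approach: the paper does not give a formal proof of this proposition, offering only the one-sentence gloss that global sections ``consist of positive local sections (data over $v$) which agree when restricted to data over incident edges, using the kernel to measure agreement.'' Your proposal is precisely a careful unpacking of that sentence, including the two points the paper leaves implicit (that non-precompact edges impose no constraint, and that equality in the cone coincides with equality in the ambient vector space).
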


Otherwise said, global sections ($H^0$) consist of positive local sections (data over $v$) which agree when restricted to data over incident edges, using the kernel to measure agreement.  This is a linear-algebraic view of why the evasion criterion of this paper works whereas others do not: the positive cohomology sheaf $\positivecohomology{n-1}\sensedspace$, instead of computing an Abelian kernel (as in ordinary cohomology), computes the positive kernel of a difference of two maps on positive cones. 

This prompts an algorithmic approach to computation in the cellular setting. 
Let $\mathcal{H}$ be a sheaf of positive, polyhedral cones in finite dimensional real vector spaces and maps between such cones.
Fixing positive bases for the stalks of $\sheaf{3}^\pm$ over vertices and edges, the map
  \begin{equation}
    \label{eq:matrix}
        \prod_{v}\sheaf{3}^\pm(v)
        \ra
        \prod_{e}\sheaf{3}^\pm(e)
            ,
  \end{equation}  
in (\ref{eq:obstructioncomputation}) yields a real matrix (a cellular coboundary matrix). 
The entries of this matrix are differences, and thus may be positive or negative. However, to contribute to $H^0$, the kernel must have nontrivial intersection with $\prod_v\sheaf{3}(v)$, meaning that one must compute the kernel subject to a sequence of inequalities. 
%In most cases envisaged (see the subsequent section and examples), all positive cones implicated are positive orthants in finite dimensional real vector spaces.

Computing $\globalsections{}{\sheaf{3}}$ is therefore reduced to the question of whether a known subspace of a real vector space (the kernel of (\ref{eq:matrix})) intersects a positive cone in that vector space.  By normalizing the cone to, say, unit $\ell^1$ distance to the origin, finding a nonzero element of $\globalsections{}{\sheaf{3}}$ is expressible as a simple linear programming problem. Assuming a fixed bound on the complexity of the cones (the number of constraints), the solution to this problem via standard methods is linear in the number of variables \cite{MegiddoLinear84}, in this case, the sum of the number of generators of $\sheaf{3}(v_i)$ over all $i$. 

% -----------------------------------------------------------------------------------------------------------------
\section{Planar examples}\label{sec:examples}
% -----------------------------------------------------------------------------------------------------------------

We give a computation of the criterion for examples in the planar case ($n=2$).
The two principal examples from \cite{ACEvasion14} alluded to in \S\ref{sec:intro} are the simplest examples with which to show how $\positivecohomology{1}$ determines evasion. Consider the coverage regions $\sensedspace$ over $\timedomain$ illustrated in Figures \ref{fig:works} and \ref{fig:noworks}. Stratify $\timedomain$ as shown in a manner compatible with the projection map of $\sensedspace$ to $\timedomain$: this decomposition has four vertices $\{v_i\}_1^4$ and five edges $\{e_j\}_1^5$ with $e_1$ and $e_5$ extending to $\pm\infty$. 

% ###################################################
\begin{figure}[h]
      \includegraphics[width=5.0in]{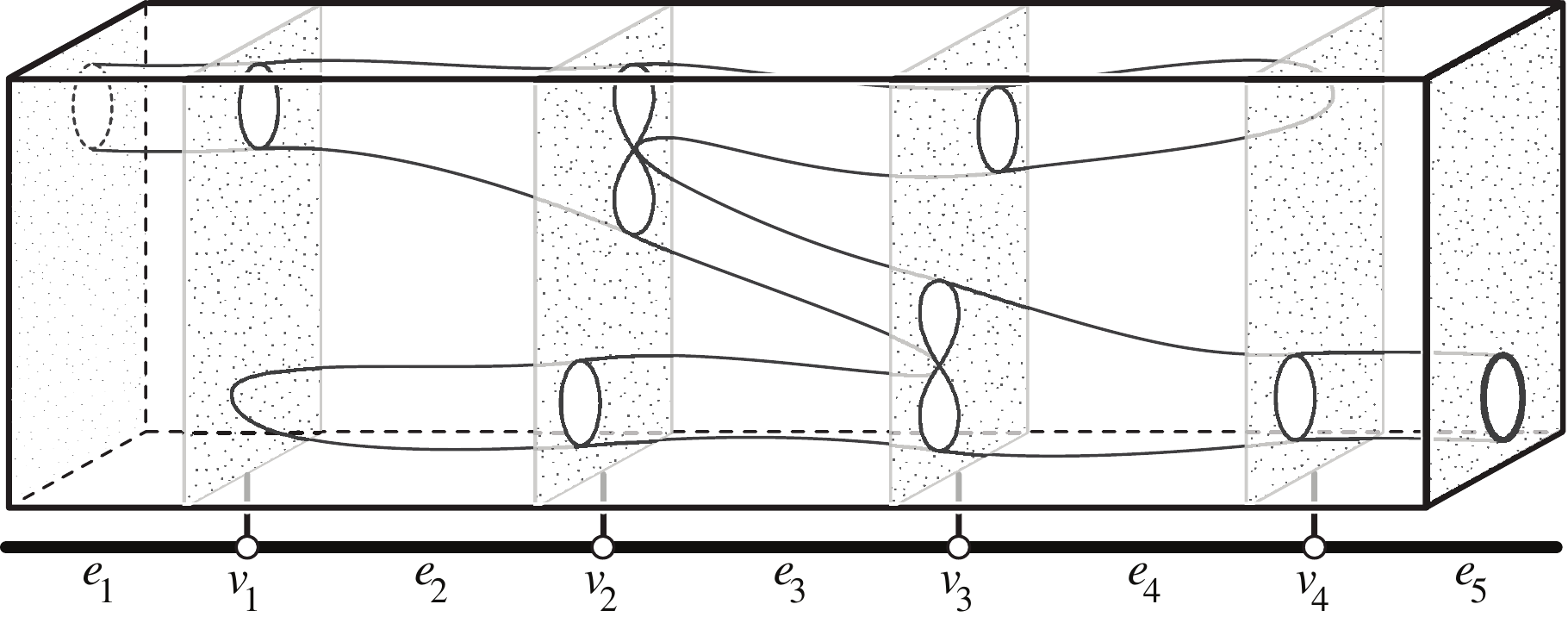} 
  \caption{A simple planar case in which an evasion path exists. Shown is a parameterized space of (unbounded, shaded) sensed regions, $\sensedspace_t$ and (bounded) evasion sets as a function of time.}
  \label{fig:works}
\end{figure}
% ###################################################

% ###################################################
\begin{figure}[h]
      \includegraphics[width=5.0in]{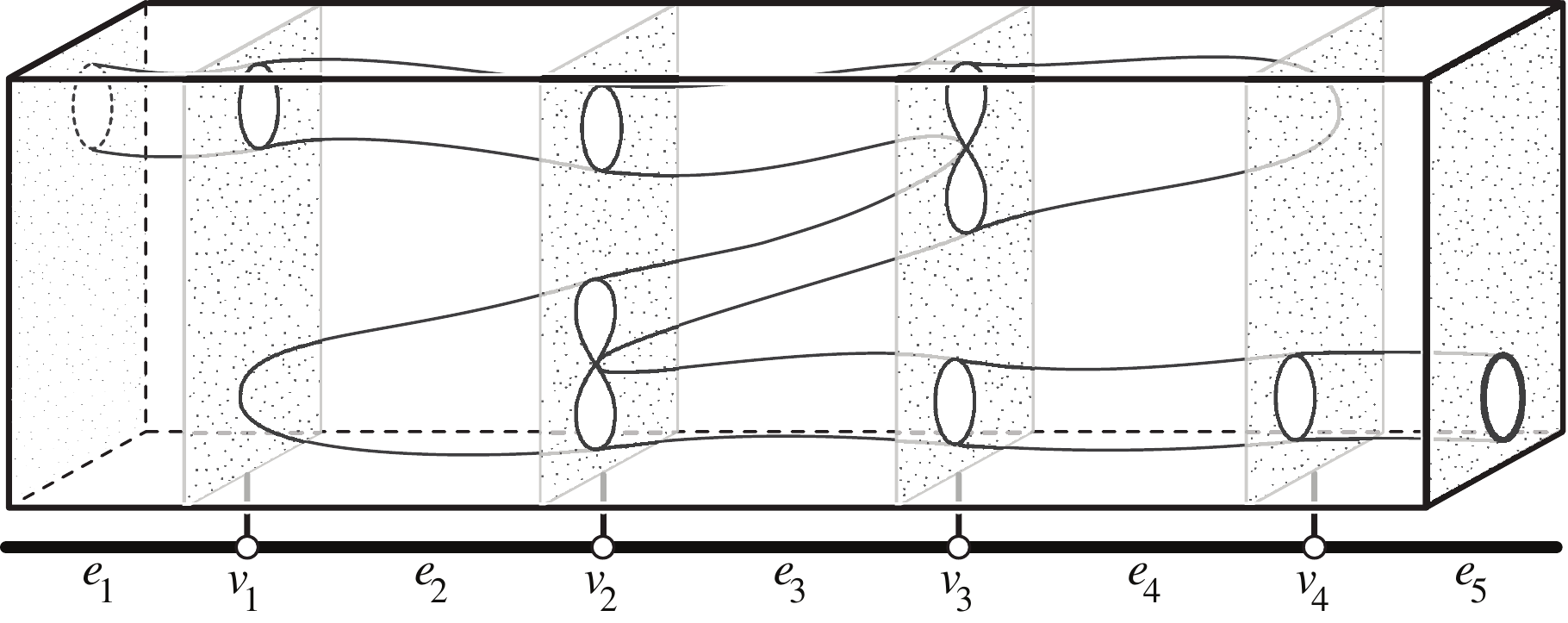} 
  \caption{The analogue of Figure \ref{fig:works} for which no evasion path exists.}
  \label{fig:noworks}
\end{figure}
% ###################################################

With this subdivision of $\timedomain$, the associated sheaves are cellular, with stalks computed in terms of local sections. (Specifically, the stalk over a vertex $v_i$ equals the value of the sheaf on the open star $\openstar(v_i)$.) In both cases, the dimension of the stalks of $\positivecohomologysheaf{1}$ are given as follows: 
\begin{itemize}
\item There is one generator over $e_1$ and $e_5$; two over $e_2$ and $e_4$; and three over $e_3$.
\item There is one generator over $v_1$ and $v_4$; and three over $v_2$ and $v_3$. 
\end{itemize}
Label all these generators by the cells in the time axis, with superscripts where "t" connotes a generator associated to the top hole, "b" the bottom, and "m" the middle. In all cases, the cones determined by $\positivehomologysheaf{1}$ are positive orthants in $\R^k$ for $k$ the number of generators of the stalk. 

All the induced maps from stalks over $v_i$ to stalks over $e_i$ and $e_{i+1}$ are either trivial inclusions or projections given by correspondence of generators, and it is in this regard that the sheaves for Figures \ref{fig:works} and \ref{fig:noworks} differ.  
The global criterion $\globalsections{}{\positivecohomologysheaf{1}}$ is computed via Equation (\ref{eq:obstructioncomputation}) using these restriction maps with the appropriate signs. 
Note that, since we are computing cellular cohomology, we must zero-out all the maps to $e_1$ and $e_5$, since these have noncompact closure in $\R$, as per the discussion above. 
We present this data in the form of a matrix with entries of the form $a^+_{ij}-a^-_{ij}$, so as to see the explicit dependence on whether the term comes from the left or right in time. 
For the system of Figure \ref{fig:works}, this matrix is:
\begin{equation}
\label{eq:works}
\prod_{v}\positivecohomology{1}(\delta_v^+) - \positivecohomology{1}(\delta_v^-)
=
\begin{array}{|c||c|c|c|c|c|c|c|c|}
  \hline
          & v_1^t    & v_2^t      & v_2^m      & v_2^b      & v_3^t      & v_3^m      & v_3^b      & v_4^b \\  \hline \hline
%  e_1^t & 1\setmin 0 & 0\setmin 0 & 0\setmin 0 & 0\setmin 0 & 0\setmin 0 & 0\setmin 0 & 0\setmin 0 & 0\setmin 0 \\    \hline
  e_2^t & 0\setmin 1 & 1\setmin 0 & 1\setmin 0 & 0\setmin 0 & 0\setmin 0 & 0\setmin 0 & 0\setmin 0 & 0\setmin 0 \\    \hline
  e_2^b & 0\setmin 0 & 0\setmin 0 & 0\setmin 0 & 1\setmin 0 & 0\setmin 0 & 0\setmin 0 & 0\setmin 0 & 0\setmin 0 \\  \hline
  e_3^t & 0\setmin 0 & 0\setmin 1 & 0\setmin 0 & 0\setmin 0 & 1\setmin 0 & 0\setmin 0 & 0\setmin 0  & 0\setmin 0 \\  \hline
  e_3^m & 0\setmin 0 & 0\setmin 0 & 0\setmin 1 & 0\setmin 0 & 0\setmin 0 & 1\setmin 0 & 0\setmin 0  & 0\setmin 0 \\  \hline
  e_3^b & 0\setmin 0 & 0\setmin 0 & 0\setmin 0 & 0\setmin 1 & 0\setmin 0 & 0\setmin 0 & 1\setmin 0  & 0\setmin 0 \\  \hline
  e_4^t & 0\setmin 0 & 0\setmin 0 & 0\setmin 0 & 0\setmin 0 & 0\setmin 1 & 0\setmin 0 & 0\setmin 0  & 0\setmin 0 \\  \hline
  e_4^b & 0\setmin 0 & 0\setmin 0 & 0\setmin 0 & 0\setmin 0 & 0\setmin 0 & 0\setmin 1 & 0\setmin 1  & 1\setmin 0 \\  \hline
%  e_5^b & 0\setmin 0 & 0\setmin 0 & 0\setmin 0 & 0\setmin 0 & 0\setmin 0 & 0\setmin 0 & 0\setmin 0  & 0\setmin 1 \\  \hline
\end{array}
%\quad\quad
\end{equation}

One computes that $\positivecohomology{1}\sensedspace\neq\emptyset$ as follows. The kernel of (\ref{eq:works}) is one-dimensional with a positive generator given by the sum of generators $v_1^t + v_2^m + v_3^m + v_4^b$.  These generators, of course, correspond to the dual evasion path sequence easily observed from Figure \ref{fig:works}. The cohomology $\positivecohomology{1}\sensedspace$ is thus isomorphic to an open ray. In this example, one can observe directly that the kernel intersects the positive orthant, and no linear programming is necessary. A larger, more opaque example would lead to more complex computation. 

For Figure \ref{fig:noworks}, the positive-negative coboundary matrix on the sheaf $\positivecohomologysheaf{1}$ is:
\begin{equation}
\label{eq:noworks}
\prod_{v}\positivecohomology{1}(\delta_v^+) - \positivecohomology{1}(\delta_v^-)
=
\begin{array}{|c||c|c|c|c|c|c|c|c|}
  \hline
          & v_1^t    & v_2^t      & v_2^m      & v_2^b      & v_3^t      & v_3^m      & v_3^b      & v_4^b \\  \hline \hline
%  e_1^t & 1\setmin 0 & 0\setmin 0 & 0\setmin 0 & 0\setmin 0 & 0\setmin 0 & 0\setmin 0 & 0\setmin 0 & 0\setmin 0 \\    \hline
  e_2^t & 0\setmin 1 & 1\setmin 0 & 0\setmin 0 & 0\setmin 0 & 0\setmin 0 & 0\setmin 0 & 0\setmin 0 & 0\setmin 0 \\    \hline
  e_2^b & 0\setmin 0 & 0\setmin 0 & 1\setmin 0 & 1\setmin 0 & 0\setmin 0 & 0\setmin 0 & 0\setmin 0 & 0\setmin 0 \\  \hline
  e_3^t & 0\setmin 0 & 0\setmin 1 & 0\setmin 0 & 0\setmin 0 & 1\setmin 0 & 0\setmin 0 & 0\setmin 0  & 0\setmin 0 \\  \hline
  e_3^m & 0\setmin 0 & 0\setmin 0 & 0\setmin 1 & 0\setmin 0 & 0\setmin 0 & 1\setmin 0 & 0\setmin 0  & 0\setmin 0 \\  \hline
  e_3^b & 0\setmin 0 & 0\setmin 0 & 0\setmin 0 & 0\setmin 1 & 0\setmin 0 & 0\setmin 0 & 1\setmin 0  & 0\setmin 0 \\  \hline
  e_4^t & 0\setmin 0 & 0\setmin 0 & 0\setmin 0 & 0\setmin 0 & 0\setmin 1 & 0\setmin 1 & 0\setmin 0  & 0\setmin 0 \\  \hline
  e_4^b & 0\setmin 0 & 0\setmin 0 & 0\setmin 0 & 0\setmin 0 & 0\setmin 0 & 0\setmin 0 & 0\setmin 1  & 1\setmin 0 \\  \hline
%  e_5^b & 0\setmin 0 & 0\setmin 0 & 0\setmin 0 & 0\setmin 0 & 0\setmin 0 & 0\setmin 0 & 0\setmin 0  & 0\setmin 1 \\  \hline
\end{array}
%\quad\quad
\end{equation}

One computes that this matrix has, as before, one-dimensional kernel; however, the generator of this kernel is 
\begin{equation*}
  v_1^t + v_2^t + v_3^t - v_3^m - v_2^m + v_2^b + v_3^b + v^4_b,    
\end{equation*}
which has both positive and negative terms. The intersection of this kernel with the open positive orthant is empty; therefore,  $\positivecohomology{1}\sensedspace=\emptyset$, and there is no evasion path. 

% -----------------------------------------------------------------------------------------------------------------
\section{Concluding remarks}
\label{sec:conc}
% -----------------------------------------------------------------------------------------------------------------

%We close with the following remarks.
\begin{enumerate}
\item
For genuine applications to pursuit problems, much of the structure
here introduced is adaptable.  For example, the presence of obstacles
in the free space (an important consideration in practice) is
admissible by including the obstacle regions within the coverage
domain. The use of a sensor network to determine the coverage region
(by means of, say, a Vietoris-Rips complex) may be likewise permissible,
if the relevant cohomology and orientation data are discernible. 
This is an interesting open problem.
\item
This work has considered only the connectivity data about the evasion
paths. The full space of evasion paths may have interesting
topological features beyond connectivity alone.  There would appear also to
be virtue in augmenting $\pathcomponents$ with additional data about evaders:
number, identity, team-membership, or other features, much in the same
way that {\it feature vectors} are used in target tracking.  It is to
be suspected that sheaf constructions permit a great deal of such
information to be encoded.
\item
The use of linear programming to compute the positive cohomology in
this paper is perhaps the beginnings of a broader set of techniques in
computational equalizers and what we would call {\it homological programming}.
\item
Finally, the utility of directed co/homology appears to be relevant to
other applications in which directedness is crucial, including
Bayesian belief networks.
\end{enumerate}

\appendix

\addcontentsline{toc}{section}{Appendix}
\addtocontents{toc}{\protect\setcounter{tocdepth}{0}}

% -----------------------------------------------------------------------------------------------------------------
\section{Existence of sections}\label{appendix:sec:parametrized.spaces}
% -----------------------------------------------------------------------------------------------------------------
We give homotopical criteria for open subspaces of vector bundles over $\timedomain$ to admit sections.

\begin{lem}
  \label{lem:holim}
  Consider the following data.
  \begin{enumerate}
    \item a CW structure on $\timedomain$; and
    \item an open subspace $\paramspace$ of a Euclidean space over $\timedomain$
  \end{enumerate}
  The natural function $\sections_\paramspace(\timedomain)\ra\lim_c\pathcomponents\sections_\paramspace(c)$, where $c$ denotes a closed cell in $\timedomain$, is surjective.
\end{lem}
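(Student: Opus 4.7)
The plan is to construct a global section by choosing, over each closed cell, a representative in the specified path-component, modifying these representatives so they agree at shared vertices, and then gluing. Let $(\alpha_c)_c \in \lim_c \pathcomponents \sections_\paramspace(c)$ be given. Since each vertex is a $0$-cell, $\sections_\paramspace(v) = \paramspace_v$ and $\alpha_v$ is literally a path-component of the open fiber $\paramspace_v \subset \R^n$; I would fix $p_v \in \alpha_v$ for each vertex $v$, and a section $s_e \in \alpha_e$ for each closed edge $e = [v_-,v_+]$. Compatibility of the family forces $s_e(v_\pm)$ to lie in $\alpha_{v_\pm}$, so it lies in the same path-component of $\paramspace_{v_\pm}$ as $p_{v_\pm}$.

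The heart of the argument is the following modification claim: given a section $s\colon [a,b] \to \paramspace$ and a point $p$ in the same path-component of $\paramspace_a$ as $s(a)$, there is a section $\tilde s$ in the same path-component of $\sections_\paramspace([a,b])$ as $s$, equal to $s$ near $b$, with $\tilde s(a) = p$. To produce $\tilde s$, choose a path $\gamma\colon [0,1] \to \paramspace_a$ from $s(a)$ to $p$. Because $\gamma([0,1])$ and the graph of $s$ are compact subsets of the open set $\paramspace$, openness of $\paramspace$ provides $\delta > 0$ with
\[
  \gamma([0,1]) \times [a, a+\delta] \;\subset\; \paramspace, \qquad s([a, a+\delta]) \times [a, a+\delta] \;\subset\; \paramspace.
\]
Define $\tilde s(t)$ to be $\gamma(1 - 2(t-a)/\delta)$ on $[a, a+\delta/2]$, the double-speed copy $s(2t - a - \delta)$ on $[a+\delta/2, a+\delta]$, and $s(t)$ on $[a+\delta, b]$; a routine case check using the two box conditions above verifies $(\tilde s(t),t) \in \paramspace$. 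Replacing $\delta$ by $\tau\delta$ for $\tau \in [0,1]$ gives a continuous path of sections from $s$ (at $\tau=0$) to $\tilde s$ (at $\tau=1$) inside $\sections_\paramspace([a,b])$, so $\tilde s$ is in the same path-component as $s$.

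Applying this claim at each endpoint of each closed edge (the second application leaves the first endpoint's value unchanged), I obtain $\tilde s_e \in \alpha_e$ with $\tilde s_e(v_\pm) = p_{v_\pm}$. Adjacent closed edges meet at a single vertex $v$, at which both modified sections take the common value $p_v$, so the family $\{\tilde s_e\}$ glues to a continuous function $\tilde s\colon \timedomain \to \paramspace$. This function is a section, and its image in $\lim_c \pathcomponents \sections_\paramspace(c)$ is the original family $(\alpha_c)_c$.

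The main obstacle is the modification claim, which is essentially a path-lifting property for the evaluation map $\sections_\paramspace([a,b]) \to \paramspace_a$. The difficulty is that $p$ may be far from $s(a)$, so the homotopy between $s$ and $\tilde s$ must route the value at $a$ along the entire path $\gamma$ while keeping the whole section inside $\paramspace$; the two openness estimates above, applied to $\gamma$ and to $s$ separately, are precisely what make the explicit formula above work.
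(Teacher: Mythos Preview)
Your overall strategy matches the paper's: pick representative sections over each closed edge, exploit openness of $\paramspace$ together with compactness of a connecting path in the fiber to find a small $\delta$ over which that path can be slid in the time direction, reparametrize to absorb the detour, and glue. The paper economizes by modifying only the left endpoint of each edge (taking $s_{i-1}(i)$ itself as the target rather than an independently chosen $p_v$), but this is cosmetic.

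There is, however, a genuine slip in your homotopy. Replacing $\delta$ by $\tau\delta$ does \emph{not} give a continuous family of sections: for every $\tau>0$ your formula yields $\tilde s_\tau(a)=\gamma(1)=p$, whereas at $\tau=0$ you need $s(a)$. The family converges to $s$ pointwise on $(a,b]$ but jumps at $t=a$. The repair is to let the value at $a$ travel along $\gamma$ as $\tau$ varies rather than sit at $p$: take the first piece to be $\bigl(\gamma(\tau-2(t-a)/\delta),\,t\bigr)$ on $[a,\,a+\tau\delta/2]$, so that $\tilde s_\tau(a)=\gamma(\tau)$ moves continuously from $s(a)$ to $p$. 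This is exactly what the paper's homotopy $h_i$ does (its argument $1-t+(x-i)/\epsilon_i$ plays the same role). With this correction your modification claim goes through and the rest of the argument is fine.

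One smaller imprecision: in the second box condition you mean $\xi\bigl(s([a,a+\delta])\bigr)\times[a,a+\delta]\subset\paramspace$, with $\xi$ the projection to the fiber coordinates; and the justification is not really ``the graph of $s$ is compact'' but rather that $s(a)$ has a product neighborhood $B\times(a-\epsilon,a+\epsilon)\subset\paramspace$, and continuity of $\xi\circ s$ at $a$ then lets you shrink $\delta<\epsilon$ so that $\xi\bigl(s([a,a+\delta])\bigr)\subset B$.
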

%In the proof of the following lemma let $s_{|A}$ denote its restriction $A\ra X$ of a function $s$ on a set $X$ to a subset $A\subset X$.

\begin{proof}%[Proof of Lemma \ref{lem:holim}]
  Fix a natural number $n$.
  We take $\paramspace$ to be an open subspace of $\R^{n+1}$ such that $\projection_\paramspace$ is the projection onto the last coordinate. Let $\xi$ be projection $\paramspace\ra\R^n$ onto the first $n$ coordinates. We take the vertices of $\timedomain$ to be the integers without loss of generality.

  Let $i$ denote an integer, and consider $s_i\in\sections_\paramspace([i,{i+1}])$ for each $i$ and path $\gamma_i:s_{i-1}({i})\leadsto s_{i}({i})$ for each $i$. It suffices to construct $s\in\sections_\paramspace(\timedomain)$ with $s_{|(i,{i+1})}\sim s_i$ for each $i$.

  Fix $i$.

  The subspace $\im\;\gamma_i\subset\paramspace$ admits an open cover $\mathscr{O}$ consisting of open hyperrectangles because $\R^n$, and hence also its open subspace $\paramspace$, admit open bases consisting of open hyperrectangles.
  We can take $\mathscr{O}$ to be finite by $[i,i+1]$ and hence $\im\;\gamma_i$ compact.
  Hence $\bigcap_{U\in\mathscr{O}}\projection_\paramspace(U)$ is an open neighborhood in $\timedomain$ of $i$, and hence contains a subset of the form $[i,i+2\epsilon_i]$ for $0<\epsilon_i<\half$.  
  Hence 
  \begin{equation*}
    \paramspace\supset\bigcup\mathscr{O}\supset\bigcup_{V\in\mathscr{O}}\xi(V)\times\projection_P(V)\supset\bigcup_{V\in\mathscr{O}}\xi(V)\times\bigcap_{U\in\mathscr{O}}\projection_PU\supset\xi(\im\;\gamma_i)\times[i,i+2\epsilon_i]
  \end{equation*}

  Hence we can define a section $s:\timedomain\ra\paramspace$ to $\paramspace$ by
  \begin{equation*}
    s(x)=
    \begin{cases}
      (\xi(\gamma_i(\nicefrac{(x-i)}{\epsilon_i}),x), & x\in[i,i+\epsilon_i]\\
      (\xi(s_i(2x-2\epsilon_i-i)),x), & x\in[i+\epsilon_i,i+2\epsilon_i]\\
      s_i(x) & x\in[i+2\epsilon_i,i+1]
    \end{cases}
  \end{equation*}
  
  For each $i\in\Z$, $s_{|[i,i+1]}\sim s_i$ along the homotopy $h_i$ defined by
  \begin{equation*}
    h_i(x,t)=
    \begin{cases}
      (\xi(\gamma_i(1-t+\nicefrac{(x-i)}{\epsilon_i}),x), & x\in[i,i+t\epsilon_i]\\
      (\xi(s_i(2x-2t\epsilon_i-i)),x), & x\in[i+t\epsilon_i,i+2t\epsilon_i]\\
      s_i(x) & x\in[i+2t\epsilon_i,i+1]
    \end{cases}
  \end{equation*}
\end{proof}

\bibliography{pursuitevasionrefs}{}
\bibliographystyle{siam}
\end{document}